\definecolor{forest}{RGB}{66,111,68}
\newcommand{\MB}[1]{\textcolor{mb}{\textbf{MB}: #1}}   
\newcommand{\defeq}{\vcentcolon=}
\definecolor{greyblue}{RGB}{45,115,196}
\definecolor{date}{RGB}{94,129,172}
\definecolor{node}{RGB}{67,76,94}
\definecolor{path}{RGB}{76,86,106}
\definecolor{mob}{RGB}{121, 158, 89}
\definecolor{start}{RGB}{129,161,193}
\definecolor{submission}{RGB}{191,97,106}
\definecolor{gender}{RGB}{191, 97, 96}
\definecolor{wellow}{RGB}{221, 167, 59}
\definecolor{diversity}{RGB}{191,97,106}
\definecolor{mig}{RGB}{25,61,108}
\definecolor{presentation}{RGB}{45,115,196}
\definecolor{beamer}{RGB}{35,55,59}
\definecolor{cobal}{rgb}{0.38,0.48,0.55}
\definecolor{bordeaux}{rgb}{0.659,0.063,0.063}
\definecolor{nodecolor}{RGB}{253, 240, 213}
\definecolor{hp2}{RGB}{96, 0, 0}
\definecolor{hp1}{RGB}{193, 18, 31}
\definecolor{hp3}{RGB}{247, 178, 103}
\definecolor{hp5}{RGB}{0, 48, 73}
\definecolor{hp4}{RGB}{102, 155, 188}
\definecolor{hp1line}{RGB}{102, 7, 8}
\definecolor{hp4line}{RGB}{0, 53, 102}
\definecolor{c1}{HTML}{3c6e71}
\definecolor{c2}{HTML}{a8dadc}
\definecolor{c3}{HTML}{e9c46a}
\definecolor{c4}{HTML}{a3a6c2}
\definecolor{c5}{HTML}{bc4749}
\definecolor{c6}{HTML}{adc178}
\definecolor{c7}{HTML}{efb8a9}
\definecolor{edgecolor}{RGB}{253, 240, 213}
\definecolor{cite}{HTML}{52796f}
\definecolor{col2}{HTML}{8d99ae}
\definecolor{col1}{HTML}{bc4749}
\definecolor{col7}{RGB}{247, 178, 103}
\definecolor{col4}{HTML}{1d2d44}
\definecolor{col5}{HTML}{adc178}
\definecolor{col6}{HTML}{ffe5ec}
\definecolor{col3}{HTML}{FFF3B0}
\definecolor{mb}{HTML}{023e8a}
\definecolor{interval}{HTML}{ef233c}
\definecolor{deep}{HTML}{1d3557}
\newcommand\restr[2]{{
  \left.\kern-\nulldelimiterspace 
  #1 
  \littletaller 
  \right|_{#2} 
  }}
\newcommand{\littletaller}{\mathchoice{\vphantom{\big|}}{}{}{}}
\theoremstyle{plain} 
\newtheorem{thm}{Theorem}[section] 
\newtheorem{lem}[thm]{Lemma}
\theoremstyle{definition} 
\newtheorem{defn}[thm]{Definition} 
\theoremstyle{remark} 
\newtheorem{oss}[thm]{Remark}
\begin{document}  

\title{Fibration Symmetries and Cluster Synchronization in Multi-Body Systems}

\author{\orcidlink{0000-0003-4372-9734} Margherita Bertè}
 \email{margherita.berte@imtlucca.it}
 \affiliation{IMT School for Advanced Studies, P.zza San Francesco 19, 55100 Lucca (Italy).}
\author{\orcidlink{0000-0001-9154-1758} Tommaso Gili}
 \email{tommaso.gili@imtlucca.it}
 \affiliation{IMT School for Advanced Studies, P.zza San Francesco 19, 55100 Lucca (Italy).}

\date{\today}

\begin{abstract}
Structural symmetries influence synchronization patterns in complex systems. In particular, for graphs is established the interplay between clusters of synchronization for dynamical processes and classes of equivalent nodes that have isomorphic input trees. Recent works generalized and proved for directed hypergraphs that balanced partitions characterize robust synchrony invariant under all admissible dynamics.
In this work, we address undirected hypergraphs and identify node partitions based on equivalent incidence relations in a hypergraph, using its bipartite graph representations. We also show that for the Kuramoto model with higher-order interactions and frustration parameters, clusters of identical oscillators with the same initial conditions that synchronize coincide with the fibration partition for almost all choices of the parameters. We provide algorithms, code implementations, and visualization methods to compute local input equivalence and analyse its relation to synchronization behavior. Simulations with real-world and synthetic topologies illustrate how structural properties and representational choices affect cluster formation. We discuss topology adjustments for target cluster configurations under noise and incomplete information, identifying directions for future investigations.
\end{abstract}

\keywords{fibration symmetries, higher-order interactions, cluster synchronization, Kuramoto model with frustration}

\maketitle

\section{Introduction}
The study of symmetry in complex systems has been central to uncovering principles governing collective dynamics, particularly synchronization. Classical approaches, grounded in group theory and graph automorphisms, have revealed how structural features shape the evolution of connected systems. Yet in many biological and technological networks, global symmetries are rare. This has led to a shift toward local symmetry frameworks, utilizing groupoid theory rather than traditional group theory. Groupoids, introduced by Brandt in 1926, have since served various mathematical contexts~\cite{brown_groups_1987}, offering a generalized framework that extends and relaxes group properties. In our context, we focus on groupoids made by graph fibrations symmetries, which partition nodes based on equivalence in their input flows and have proven effective in identifying cluster synchronization patterns.
Indeed, graph fibrations, a concept rooted in category theory~\cite{grothendieck_technique_1960}, provide a useful framework to uncover synchronization patterns in complex networks and their relation with the topology~\cite{boldi_fibrations_2002, deville_modular_2013, nijholt_graph_2016}.
While extensive research has focused on fibration symmetries in graphs, the growing interest in systems with higher-order interactions (HOI) \cite{battiston_networks_2020, bick_what_2023}  requires further development in this area. In such systems, interactions are not limited to pairs but involve groups of nodes, posing new challenges for understanding how structure governs dynamics. We position our work within this landscape by focusing on the generalization of fibration symmetries to HOI systems.

\subsection*{Related works}

Graph fibrations and symmetry-based methods have been widely explored across different domains to understand synchronization and collective dynamics, from distributed computing~\cite{boldi_fibrations_2002}, gene regulatory systems~\cite{leifer_predicting_2021}, to the neuronal network~\cite{morone_symmetry_2019}. Recent advancements in network theory have shed light on the deep connection between network structure and function, particularly through the lens of graph fibrations and symmetry groups~\cite{boldi_fibrations_2002, deville_modular_2013, golubitsky_nonlinear_2006, nijholt_graph_2016}.

In the context of biological networks, Morone et al.~\cite{morone_symmetry_2019} applied symmetry group factorization to the neural connectome of \textit{Caenorhabditis elegans}, showing a hierarchical structure that matches broad functional categories. 
Further, Leifer et al.~\cite{leifer_predicting_2021} applied fibration symmetries to predict synchronized gene coexpression patterns in bacterial gene regulatory networks. Their findings suggest that network structure alone can provide valuable insights into gene synchronization, potentially reflecting evolutionary pressures that have shaped gene input functions to realize coexpression.
The application of these theoretical frameworks to human brain networks is beginning to yield insights into cognitive function. Makse et al.~\cite{makse_fibration_2024} applied fibration symmetry analysis to brain networks involved in language processing, revealing how symmetry breaking supports functional transitions between resting state and language engagement. That work suggests that local network symmetry in the brain may play a crucial role in determining coherent function, offering a new perspective on the structure-function relationship in neuroscience.

In all cases where global symmetries do not apply, as in the biological realm, the shift of focus from global to local patterns and their impact on the entire system has gained more attention. With this perspective, cluster synchronization refers to the phenomenon where subsets of nodes in a network synchronize their dynamics while remaining out of sync with nodes in other clusters. This behavior is often associated with structural symmetries or equitable partitions in the network, which constrain the dynamical evolution and lead to partial synchronization patterns. Different works focus on the concept of cluster (or partial) synchronization. 
On the one hand, providing computational perspectives through algorithms to partition the nodes~\cite{kamei_computation_2013} or to optimize the analysis of stability for local dynamical patterns~\cite{zhang_unified_2021}. On the other hand, providing a case study or a unified framework and theory related to dynamical processes and node partition~\cite{ gambuzza_stability_2021, pecora_cluster_2014,salova_cluster_2022}.

The Kuramoto model describes coupled phase oscillators and typically leads to global synchronization under strong coupling when natural frequencies are identical. To capture more complex phenomena, such as partial or inhibited synchronization, researchers have extended the model by introducing phase-lag (frustration) terms. Nicosia et al.~\cite{nicosia_remote_2013} linked these frustrated dynamics to remote synchronization mediated by network symmetries, a relationship further explored in the comprehensive overview on cluster synchronization~\cite{schaub_graph_2016}. Kirkland et al.~\cite{kirkland_alphakuramoto_2013} introduced $\alpha$-Kuramoto partitions to study synchronization patterns constrained by a phase lag, while Brede~\cite{brede_frustration_2016} explored heterogeneous phase shifts via local frustration parameters. 

Beyond pairwise interactions, higher-order interactions generalize the notion of connectivity to groups of nodes, posing additional challenges for analysing partial synchronization. Recent studies on hypergraphs or simplicial complexes often rely also on equitable partitions to understand these patterns~\cite{aguiar_network_2023, salova_cluster_2022}. 
Multi-body generalizations of the Kuramoto model, incorporating frustration or phase-lag parameters, have been proposed to capture more complex dynamical behaviors. However, existing generalizations have primarily focused on all-to-all configurations. Notable contributions include Dutta et al.'s asymmetric Kuramoto model for hypergraphs~\cite{dutta_impact_2023}, which generalizes frustration to higher-order interactions, Moyal et al.'s work specifically addressing frustration in hypergraphs limited to three-body interactions~\cite{moyal_rotating_2024}, and León et al.~\cite{leon_higherorder_2024}, who showed that higher-order interactions with nonzero phase-lag in globally coupled ensembles can produce anomalous transitions to synchrony. Further, studies on Kuramoto models with bi and triadic interactions highlight that the impact of higher-order interactions on total synchronization can nontrivially alter both the linear and basin stability of Kuramoto dynamics~\cite{zhang_deeper_2024} and that depend on the chosen representation, enhancing or suppressing collective synchronization in hypergraphs versus simplicial  complexes~\cite{zhang_higherorder_2023}.
This point aligns with recent findings by Salova et al.~\cite{salova_analyzing_2021}, who emphasized the impact of representational choices on synchronization dynamics. 

In parallel, for multi-body interactions, the study of local symmetries has provided a structural perspective on cluster synchronization for directed hypergraphs~\cite{aguiar_network_2023, vondergracht_hypernetworks_2023}. In particular, Von der Gracht et al.\ formally extended the notion of fibrations to systems with higher-order interactions and demonstrated that balanced partitions induced by fibrations identify robust synchrony spaces for all admissible dynamics.
Notably, the comprehensive work~\cite{makse_symmetries_2025} about fibrations symmetries dedicates a chapter to fibrations for higher-order interactions. Our objects of study are undirected hypergraphs, while for the particular case of simplicial complexes, we refer to \cite{nijholt_dynamical_2022}, in which Nijholt et al. studied the equivalent concept of balanced coloring, considering also the orientation of the simplices.
We also aim to prove how local structural equivalences can relate to partial synchronisation in a Kuramoto model, even between nodes that are not directly connected. This emphasizes the relevance of remote synchronization phenomena, which can arise independently of global automorphism symmetries. To this end, we focus on a nonlinear dynamical model, which allows us to capture complex behaviors that more abstract linear approaches might miss, as highlighted in~\cite{vondergracht_hypernetworks_2023}. We focus on fibration symmetries, as they offer a principled way to partition nodes based on identical input structures. This choice differentiates our approach from other similar studies that refer in general to symmetries or equitable partitions, \cite{aguiar_network_2023,salova_cluster_2022}. Indeed, our notion, which corresponds to the coarsest equitable partition or minimal balanced coloring, is central to our analysis not only for its connection to dynamics but also because it captures intrinsic structural regularities that are relevant on their own. 
Building on the theoretical results of~\cite{aguiar_network_2023, vondergracht_hypernetworks_2023} for directed hypergraphs, who proved that balanced partitions characterize robust synchrony invariant under all admissible dynamics, we restrict attention to the higher-order Kuramoto model with frustration, showing that clusters of synchrony coincide with the fibration partition under our model assumptions.
Our approach intersects with the recent work~\cite{sanchez-garcia_equitability_2025} in the use of local symmetries and equitable relations to explain collective dynamics. While Kovalenko et al. establish general conditions for equitable synchronization in multiplex and higher-order networks, we provide computational tools and empirical validation for the specific case of higher-order Kuramoto dynamics.
From a technical viewpoint, the linear independence assumptions on the coupling functions introduced in~\cite{sanchez-garcia_equitability_2025} play the same role as our genericity argument. In their framework, linear independence of the coupling families is imposed as a hypothesis to rule out pathological cancellations between different coupling contributions, which would allow synchrony patterns not aligned with the structurally equitable partitions. 
In our setting, such cancellations correspond to a measure–zero subset of the parameter space, apart from which, Kuramoto synchrony partitions necessarily coincide with the fibration partition.

This work refines the understanding of fibration symmetries in systems with higher-order interactions by addressing how to compute fibres in undirected hypergraphs effectively. Focusing on the higher-order Kuramoto model with frustration, a well-established setting in physics, we show that fibre partitions correspond to invariant synchronization clusters under specific structural conditions.
Our analysis establishes that, starting from the agnostic case of identical oscillators with identical initial conditions, synchronous states not aligned with the fibration decomposition are instantaneously destabilized by mismatches in the input tree of the nodes.
Alongside the theoretical discussion, we provide algorithmic insights and practical procedures to implement fibre detection and symmetry-preserving topology modifications. These contributions bridge abstract formulations and computational practice, offering concrete tools for analysing synchronization phenomena in complex higher-order networks.
Throughout the paper, we provide visualizations and examples to illustrate key concepts, maintaining an operative perspective through algorithmic implementations and references to real-world network topologies.
By integrating theoretical insights with empirical evidence, our research provides a conceptual framework and methodologies for examining HOI systems via fibration symmetries.
Finally, in the concluding section, we discuss the practical challenges encountered with hypergraph fibrations in symmetry-driven network modification~\cite{boldi_quasifibrations_2022, gambuzza_controlling_2021, leifer_symmetrydriven_2022}.
We examine hypergraph sparsification methods that preserve fibre structures while removing redundant hyperedges to reduce computational complexity and improve interpretability. 
Further, we investigate topology modification techniques to reconstruct hypergraph structures when observed synchronization patterns suggest that the available topology may be incomplete or corrupted due to measurement limitations or data loss.

\section{Results}

\subsection{Graph fibration symmetries}

This subsection introduces the formal mathematical definitions of graph fibrations. By fixing the notation and presenting common basic definitions, the aim is to provide a unified framework to facilitate the interpretation of the concepts discussed throughout this work and as a reference point for extending fibration symmetries to complex systems with higher-order interactions.
 
\begin{defn}[Directed graph]
    A directed graph $G$ is a tuple $G = (N, E, s, t)$ where $N$ is the set of nodes and $E \subseteq N \times N$ is the set of edges. The source and target maps, respectively $s, t: E \rightarrow N$, associate each edge with its tail or head, namely the first or second nodes in the ordered pair of nodes constituting the edge.
\end{defn}

\begin{defn}[Undirected graph]
    An undirected graph $G = (N, E, s, t)$  is a directed graph such that $\forall e \in E, \, e = (s(e), t(e)) \Rightarrow \exists f \in E \text{ s.t. } f = (t(e), s(e))$.
\end{defn}

\begin{oss}
    An undirected graph can be characterized by specifying only the set of nodes and edges. Hence, we will omit the source and target maps.
\end{oss}

We now proceed to define fibration symmetries for directed graphs, adopting the notation by Boldi et al.~\cite{boldi_fibrations_2002} who organized and presented the fibration symmetries theory for graphs.

\begin{defn}\label{def: graph_fibration}
    Let $G = (N_G, E_G, s_G, t_G), B = (N_B, E_B, s_B, t_B)$ be directed graphs. 
    A \textbf{graph homomorphism} $\xi: G \rightarrow B$ is given by a pair of functions $\xi_N: N_G \rightarrow N_B$ and $\xi_E: E_G \rightarrow E_B$ commuting with the source and target maps.

    A \textbf{fibration} between $G$ and $B$ is a homomorphism $\varphi: G \rightarrow B$ such that $\forall  a \in E_B, \forall x \in N_G$ satisfying $\varphi(x) = t_B(a)$ there is a unique arc $\tilde{a}^{x} \in E_G$ such that $\varphi(\tilde{a}^{x}) = a$ and $t_G(\tilde{a}^{x}) = x$.
    In literature, we refer to $G$ as the total graph and to $B$ as the base graph.
    This property is called the lifting property.

    Nodes mapped in the same node in the base graph $B$ are a \textbf{fibre}.
    A trivial fibre is a singleton.
    
    Fibrations that collapse all fibres in one node in the base graph are called minimal.
\end{defn}

Throughout the paper, we will consider in particular surjective minimal fibrations, referred to as \textbf{fibration symmetry}, following the definition in \cite {leifer_symmetryinspired_2022}. 
The concept of input sets and input trees, which encode the path of incoming edges to a node, was introduced by Aldis \cite{aldis_polynomial_2008} and by Boldi et al. \cite{boldi_fibrations_2002}. They employed these notions to prove that finding nodes having isomorphic input trees is equivalent to the existence of graph fibrations.
Further, the proof of the equivalence between fibres of minimal surjective fibration and minimal balanced coloring is stated by Aldis \cite{aldis_polynomial_2008}.
This maximal grouping ensures that nodes are partitioned only when their input structures genuinely differ, avoiding artificial fragmentation of structurally equivalent nodes. Our approach is deliberately topology-centric: nodes are classified by their incidence relations (i.e., by their input trees of colored input set), enabling us to isolate the role of network structure in determining synchronization.

\begin{oss}
 \cref{def: graph_fibration} can be extended to undirected graphs by treating them as bidirected graphs, meaning that each undirected edge is regarded as two directed edges that connect the two possible configurations of source and target vertices. 
\end{oss}

\begin{figure}[ht]
    \centering
    \scalebox{1.1}{\begin{tikzpicture}[squarednode/.style={rectangle, scale=0.1}]
    
                    \tikzset{markerpieno/.style={circle, draw, fill, scale=0.5, color = node}}
                    \tikzset{marker/.style={circle, draw, scale=0.5}}


                    \node[squarednode] (ghost6) at (2.2,-1.8) {};
                    \node [marker, above left=of ghost6, node distance=2in, color=node, fill=mig!50] (G1) {};
                    \node [marker, left=of G1, node distance=2in, color=node, fill=mig!50] (G2) {};
                    \node [marker, below=of G2, node distance=2in, color=node] (G3) {};
                    \node [marker, right=of G3, node distance=2in, color=node, fill=col5!70] (G4) {};
                    
                    \path[-{Straight Barb[length=2.5pt,width=2.5pt]}, color = node, line width=0.2mm] (G4) edge node[above] {} (G2);
                    \path[-{Straight Barb[length=2.5pt,width=2.5pt]}, color = node, line width=0.2mm] (G4) edge node[above] {} (G1);
                    \path[-{Straight Barb[length=2.5pt,width=2.5pt]}, color = hp1, line width=0.2mm] (G3) edge node[above] {} (G1);
                    \path[-{Straight Barb[length=2.5pt,width=2.5pt]}, color = hp1, line width=0.2mm] (G2) edge node[below = 1.4cm, color = black] {G} (G1);
                    \path[->, >={Straight Barb[length=2.5pt,width=2.5pt, flex']}, color = hp1, line width=0.2mm] (G2) edge [out=90, in=180, loop] node[above] {} (G2);
                    \path[-{Straight Barb[length=2.5pt,width=2.5pt]}, color = hp1, bend left=25, line width=0.2mm] (G2) edge node[above] {} (G3);
                    \path[-{Straight Barb[length=2.5pt,width=2.5pt]}, color = hp1, bend left=25, line width=0.2mm] (G3) edge node[above] {} (G2);

                    \node[squarednode, above left=0.5cm and 0.7cm of ghost6] (ghost7) {};
                    \node[squarednode, right=1cm of ghost7] (ghost8) {};
                    
                    \path[-{Straight Barb[length=2.5pt,width=2.5pt]}] (ghost7) edge node[above] {$\varphi$} node[below=1.25cm] {}  (ghost8);

                    \node [marker, right=2.2cm of G1, node distance=2in, color=node, fill=mig!50] (B1) {};
                    \node [marker, below left= 1cm and 0.5cm of B1, node distance=2in] (B2) {};
                    \node [marker, below right= 1cm and 0.5cm of B1, node distance=2in, color=node, fill=col5!70] (B3) {};

                    \path[-{Straight Barb[length=2.5pt,width=2.5pt]}, color = node, line width=0.2mm] (B3) edge node[above] {} (B1);
                    \path[-{Straight Barb[length=2.5pt,width=2.5pt]}, color = hp1, bend left=25, line width=0.2mm] (B2) edge node[above] {} (B1);
                    \path[-{Straight Barb[length=2.5pt,width=2.5pt]}, color = hp1, bend left=25, line width=0.2mm] (B1) edge node[above] {} (B2);
                    \path[->, >={Straight Barb[length=2.5pt,width=2.5pt, flex']}, color = hp1, line width=0.2mm] (B1) edge [color = hp1, line width=0.2mm, out=45, in=135, loop] node [below = 1.75cm, color = black] {B} (B1);


\end{tikzpicture}}    
    \caption{\textit{Surjective graph fibration.} Example of graph fibration $\varphi: G \rightarrow B$. Nodes with the same color belong to the same fibre in the total graph $G$ and are mapped by  $\varphi$ to the unique node with the same color in the base graph $B$, which is the quotient graph.}\label{fig: graph_fibration}
\end{figure} 

Makse et al.~\cite{makse_symmetries_2025} provide a detailed discussion of the different notions of graph quotients that arise when one takes a node partition as an equivalence relation. The definition is subtle, since different conventions lead to different objects. In our setting, we require the quotient to coincide with the base graph in the sense of fibration symmetry. In particular, the construction must guarantee the lifting property.

\begin{defn}[Quotient graph]
Let $G = (N, E)$ be a directed graph and let $\sim$ be the equivalence relation on $N$ induced by a fibration symmetry, i.e., $u \sim v$ whenever $u$ and $v$ belong to the same fibre.
For $v \in N$, denote by $[v]$ the fibre containing $v$.

The quotient graph $G/\sim \,\defeq (\bar N, \bar E)$ is defined as follows:
\begin{itemize}
    \item The node set is $\bar N = \{ [v] \mid v \in N \}$.
    \item For an ordered pair of fibres $[u], [v] \in \bar N$, the number of directed edges between them in $\bar E$ equals the number of edges in $G$ between a vertex of $[u]$ and a vertex of $[v]$.
    \item If $u$ and $v$ lie in the same fibre, loops in $\bar E$ record the edges of $G$ internal to that fibre.
\end{itemize}
\end{defn}

Thus, the quotient graph is obtained by collapsing each fibre to a single node while preserving the connectivity pattern between fibres. In \cref{fig: graph_fibration}, we depict a representation where nodes in the same fibre (with the same color) are collapsed into a unique node in the quotient graph on the right (also named base graph). We can observe that the fibration symmetry $\varphi$ maps each node to a quotient node whose incoming edges originate from nodes of the same fibres as in the total graph.

\subsection{Higher-order interactions}\label{sec: cat_eq}
This subsection sets the notations needed to model higher-order interactions in complex systems. We recall the definitions of hypergraphs and their associated bipartite representations. We then emphasize how the incidence bipartite graph provides a natural framework for identifying fibres in multi-body interactions. Finally, we introduce the notions of fibres and fibration symmetries in the context of hypergraphs.

\begin{defn}[Directed hypergraph]\label{def: dir_hypg}
    A directed hypergraph $H = (N, E)$ consists of a node set $N$ and a set of directed hyperedges $E$. A directed hyperedge $e \in E$ is a pair $(T(e), H(e))$, 
    where the tail $T(e)$ is a non-empty multiset of elements of $N$ 
    and the head $H(e)$ is a non-empty subset of $N$.
\end{defn}

\begin{oss}
    In our setting, we do not require an order among the nodes in the tail or  or head of each hyperedge, differently from \cite{vondergracht_hypernetworks_2023}.
\end{oss}
As a particular case of directed hypergraphs, we can consider undirected hypergraphs (hereafter referred to as hypergraphs). They will be the focus of our work.

\begin{defn}[Hypergraph]\label{def: hypg}
    An undirected hypergraph $H = (N, E)$ consists of a node set $N$ and a hyperedge set $E$, where each hyperedge is a non-empty multiset of nodes from $N$.
    The cardinality of a hyperedge is called its order.
    The rank $rk(H)$ of a hypergraph is the maximum of the orders of its hyperedges, $rk(H) = \max\limits_{e \in E} \lvert e \rvert$.
\end{defn}

\begin{oss}    
    Because hyperedges are multisets, a node may appear more than once within the same hyperedge.
    While this is not the standard convention in hypergraph theory, it is a natural choice when working with quotient constructions, and it has already been adopted in related contexts (e.g.~\cite{aguiar_network_2023, vondergracht_hypernetworks_2023}).
    This convention will be important here to guarantee that the quotient of a hypergraph remains a hypergraph under the current definition (refer to the hypergraph on the right in \cref{fig: hypergraph_quotient}).
\end{oss}

\begin{oss}
      An undirected hypergraph can be considered as a directed hypergraph with all possible combinations of source and target sets in each hyperedge, as nicely depicted in Figure 1 of \cite{gallo_synchronization_2022}.
\end{oss}

\begin{oss}    
    Observe that if $rk(H)=2$, then $H$ is an undirected graph.
\end{oss}

\begin{defn}[Bipartite digraph]\label{def: bip_graph}
    A directed bipartite graph (or bipartite digraph) is a graph $G = (N_1 \sqcup N_2, E)$ where the nodes can be partitioned into two disjoint subsets (named layers) such that $E \subseteq N_1 \times N_2$. It means that the edges cannot contain nodes belonging to the same layer.
\end{defn}

\paragraph*{Incidence graph representation}

A hypergraph $H = (N, E)$ can be represented via its incidence bipartite digraph $B_H = (N \sqcup E, F)$, where $F = \{(n, e) \in N \times E \,\vert\, n \in e\}$, also called factor graph \cite{makse_symmetries_2025}. In \cite{boldi_emergence_2023}, the author proves a categorical equivalence between directed hypergraphs and bipartite RB-graphs.
With the same perspective, in \cref{sec: appendix_cat} we extend the demonstration and propose the categorical equivalence of the undirected hypergraphs and bidirectional bipartite graphs. 
This relationship is naturally intuitive, as the unique incidence matrix (apart from the order) of a hypergraph coincides with the unique bi-adjacency matrix of the bipartite graph having as nodes in the first layer the hypergraph nodes and as nodes in the second layer the hyperedges.
Thus, a hypergraph can be handled by its incidence bipartite representation, and algorithms designed to partition graph nodes with the same input trees can be applied to hypergraph nodes by regarding them as belonging to an inhomogeneous directed graph.

\begin{oss}
 The bipartite graph must not contain any isolated node since disconnected nodes in the second layer would represent empty hyperedges, which fall outside the formal definition of a hypergraph.
\end{oss}

\subsection{Fibration symmetries for hypergraphs}

By representing the hypergraph as its incidence bipartite digraph, we can treat it as an inhomogeneous graph and thus apply fibre-computing algorithms. In this perspective, nodes corresponding to original vertices and nodes corresponding to hyperedges are explicitly distinguished from the outset by assigning them different initial types (or colors). The algorithm then partitions both sets into fibres, producing a coloring of the original nodes as well as of the hyperedges (see \cref{fig: cat_eq}).
The fibres in a hypergraph $H$ are defined as the nodes that belong to identical fibres in the associated incidence bipartite graph $B_H$ when $B_H$ is treated as an inhomogeneous graph.
Fibration symmetries for HOI are fibrations symmetries for the inhomogeneous graph representation of the incidence bipartite graph of the hypergraph, preserving the colors of the original nodes and hyperedges.

\begin{figure}[ht]
    \centering
    \scalebox{.9}{\input{Tikz/equivalence_bip_hyg_thin}}
    \caption{\textit{Workflow to find fibres in hypergraphs.} Starting from an undirected hypergraph, we can associate its incidence bipartite projection. Considering it as an inhomogeneous graph, we can apply standard graph algorithms, as the version of~\cite{kamei_computation_2013} revised by~\cite{morone_fibration_2020}, to obtain the hypergraph node partition, here depicted with different colors.}\label{fig: cat_eq}
\end{figure} 

For hypergraphs, the quotient construction is performed on the incidence bipartite graph, and the resulting quotient can then be translated back into the hypergraph setting. A key point is that preserving multi-body interactions may require allowing repeated occurrences of quotient nodes within the same hyperedge. \cref{fig: hypergraph_quotient} illustrates a fibration symmetry on a hypergraph. Nodes on the left are mapped to nodes of the same shape and color on the right. In the quotient on the right, some nodes appear multiple times in a single hyperedge, highlighting the role of multisets in the definition of a hypergraph.
It is also worth noting that, as for quotient in graphs, even if the original hypergraph is undirected, its quotient need not be. Indeed, when two distinct nodes of the same fibre are connected to a common node in the original hypergraph, collapsing them in the quotient may introduce a non-reciprocated edge, thereby breaking undirectedness.

\begin{figure}[!ht]
    \centering  
    \scalebox{0.8}{\begin{tikzpicture}[scale=0.8, >=latex]
    \node (hpg) at (0,0)
        {
\begin{tikzpicture}[scale=0.8]
                \tikzstyle{point}=[circle,draw=black,fill=black,inner sep=0pt,minimum width=4pt,minimum height=4pt]        
                \node (v1) at (-0.7,0.2) {};
                \node (v2) at (0.7,0.2) {};
                \node (v3) at (0,-1) {};
                \node (v4) at (0,-4) {};
                \node (v5) at (-0.7,-5.2) {};
                \node (v6) at (0.7,-5.2) {};
                \node (v7) at (0,-2.5) {};
                \node (v8) at (1.4,-2.5) {};
                \node (v9) at (2.6,-1.8) {};
                \node (v0) at (2.6,-3.2) {};

                \begin{scope}[fill opacity=0.45]

                    \filldraw[black, fill=edgecolor, rounded corners, line width=0.1mm] ([shift={ (-0.35,0.25)}] v1.west) -- ([shift={ (0.35,0.25)}] v2.east) -- ([shift={ (0,-0.35)}] v3.south) -- cycle;

                    \filldraw[black, fill=edgecolor, rounded corners,line width=0.1mm] ([shift={ (-0.35,0)}] v8.west) -- ([shift={ (0.15,0.44)}] v9.east) -- ([shift={ (0.15,-0.44)}] v0.east) -- cycle;
                
                \end{scope}
                    
                \begin{scope}[fill opacity=0.7]

                    \node[circle, fill=c1, draw = black, scale=0.7] (v1) at (-0.7,0.2) {};
                    \node[circle, fill=c1, draw = black, scale=0.7] (v2) at (0.7,0.2) {};
                    \node[regular polygon, regular polygon sides=3, rotate=180, fill=c6, draw = black, scale=0.6] (v3) at (0,-1) {};
                    \node[star, fill=c3, draw = black, scale=0.6] (v4) at (0,-4) {};
                    \node[regular polygon, regular polygon sides=3, fill=c5, draw = black, scale=0.6pt] (v5) at (-0.7,-5.2) {};
                    \node[regular polygon, regular polygon sides=3, fill=c5, draw = black, scale=0.6pt] (v6) at (0.7,-5.2) {};
                    \node[rectangle, fill=c4, draw = black, scale=0.8] (v7) at (0,-2.5) {};
                    \node[regular polygon, regular polygon sides=5, fill=c2, draw = black, scale=0.6pt] (v8) at (1.4,-2.5) {};
                    \node[diamond, fill=c7, draw = black, scale=0.6] (v9) at (2.6,-1.8) {};
                    \node[diamond, fill=c7, draw = black, scale=0.6] (v0) at (2.6,-3.2) {};

                \end{scope}

                \draw[black, line width=0.1mm, thin] (v3) -- (v7);
                \draw[black, line width=0.1mm, thin] (v4) -- (v5);                      
                \draw[black, line width=0.1mm, thin] (v4) -- (v6);               
                \draw[black, line width=0.1mm, thin] (v5) -- (v6);              
                \draw[black, line width=0.1mm, thin] (v7) -- (v4);              
                \draw[black, line width=0.1mm, thin] (v9) -- (v0);            
                \draw[black, line width=0.1mm, thin] (v7) -- (v8);


    \end{tikzpicture}

    \node (fib) at (5,-0.5) {
        \begin{tikzpicture}[scale=0.8]
            \node (ghost7) at (0,0) {};
            \node[right=1cm of ghost7] (ghost8) {};
            
            \path[-{Straight Barb[length=2.5pt,width=2.5pt]}] (ghost7) edge node[above] {$\varphi$} node[below=1.25cm] {}  (ghost8);
        \end{tikzpicture}
     };

     \node (quot) at (10,0)
        {\begin{tikzpicture}[scale=0.8]
                \tikzstyle{point}=[circle,draw=black,fill=black,inner sep=0pt,minimum width=4pt,minimum height=4pt]        
                \node (v1) at (-0.7,0.2) {};
                \node (v2) at (0.7,0.2) {};
                \node (v3) at (0,-1) {};
                \node (v4) at (0,-4) {};
                \node (v5) at (-0.7,-5.2) {};
                \node (v6) at (0.7,-5.2) {};
                \node (v7) at (0,-2.5) {};
                \node (v8) at (1.4,-2.5) {};
                \node (v9) at (2.6,-1.8) {};
                \node (v0) at (2.6,-3.2) {};
                \node (v09) at (2.5,-2.5) {};
                \node (v00) at (2.6,-2.5) {};

                \begin{scope}[fill opacity=0.45]

                    \filldraw[black, fill=edgecolor, rounded corners, line width=0.1mm] ([shift={ (-0.35,0.25)}] v1.west) -- ([shift={ (0.35,0.25)}] v2.east) -- ([shift={ (0,-0.35)}] v3.south) -- cycle;

                    \filldraw[black, fill=edgecolor, rounded corners,line width=0.1mm] ([shift={ (-0.35,0)}] v8.west) -- ([shift={ (0.15,0.44)}] v9.east) -- ([shift={ (0.15,-0.44)}] v0.east) -- cycle;
                
                \end{scope}
                    
                \begin{scope}[fill opacity=0.7]

                    \node[circle, fill=c1, draw = black, scale=0.7] (v1) at (0,0.1) {};
                    \node[circle, fill=c1, draw = black, scale=0.7] (v2) at (0,-0.0) {};
                    \node[regular polygon, regular polygon sides=3, rotate=180, fill=c6, draw = black, scale=0.6] (v3) at (0,-1) {};
                    \node[star, fill=c3, draw = black, scale=0.6] (v4) at (0,-4) {};
                    \node[regular polygon, regular polygon sides=3, fill=c5, draw = black, scale=0.6pt] (v6) at (0,-5.2) {};
                    \node[rectangle, fill=c4, draw = black, scale=0.8] (v7) at (0,-2.5) {};
                    \node[regular polygon, regular polygon sides=5, fill=c2, draw = black, scale=0.6pt] (v8) at (1.4,-2.5) {};
                    \node[diamond, fill=c7, draw = black, scale=0.6] (v9) at (2.5,-2.5) {};
                    \node[diamond, fill=c7, draw = black, scale=0.6] (v0) at (2.6,-2.5) {};

                \end{scope}

                \draw[black, line width=0.1mm, thin] (v3) -- (v7);
                \draw[black, line width=0.1mm, thin] (v4) -- (v6);               
                \draw[black, line width=0.1mm, thin] (v7) -- (v4);              
                \draw[black, line width=0.1mm, thin] (v7) -- (v8);          
               
                \draw[black, line width=0.1mm, thin] (v6) to[out=-45, in=-135, loop, looseness=8, min distance=7mm] (v6);
                
                \draw[black, line width=0.1mm, thin] (v09) to[out=-45, in=-135, looseness=8,  min distance=7mm] (v00);

                \path[->, >={Straight Barb[length=3pt,width=3pt, flex']}, color = black,line width=0.1mm, thin, bend left=80] (v6) edge node[below] {} (v4);


    \end{tikzpicture} };

    \node[inner sep=2pt, left = 1cm of hpg] (ghostleft) {};
    \node[inner sep=2pt, right = 1cm of quot] (ghostright) {};

\end{tikzpicture}}  
    \caption{\textit{Hypergraph quotient via fibration symmetry.} The left hypergraph shows the original structure where nodes with identical shapes and colors represent distinct vertices belonging to the same fibre under the fibration map $\varphi$ (i.e., nodes receiving equivalent input information). The right side depicts the quotient hypergraph, where nodes with matching shapes and colors represent a single supernode corresponding to an entire fibre. In the quotient representation, slightly overlapping identical nodes within hyperedges indicate multiple instances of the same supernode participating in that hyperedge, preserving the structural information while reducing redundancy through the fibration symmetry $\varphi$. Edges without arrows are undirected (equivalent to having arrows in both directions).}\label{fig: hypergraph_quotient}
\end{figure} 

\begin{oss}\label{obs: comparison_hpg_multi}
 Observe that different types of node partitioning can emerge depending on whether we analyse a hypergraph, its binary graph projection, or its multigraph projection. Each representation captures different structural features of the original hypergraph. The binary graph projection simplifies the hypergraph by representing only the existence of pairwise interactions, discarding information about the number or order of interactions. As a result, it overlooks distinctions between higher-order relationships and repeated interactions, leading to a fibre structure that lacks key information. In contrast, the multigraph projection preserves the multiplicity of interactions between pairs of nodes, capturing how frequently node pairs co-occur in hyperedges. However, it still treats all interactions as equivalent, regardless of their order, thus failing to distinguish the structural role of higher-order interactions.
\end{oss}

\begin{figure}[!ht]
    \centering  
    \scalebox{0.8}{
   \begin{tikzpicture}[scale=0.8, >=latex]
        \node[inner sep=2pt] (hpg) at (0,0)
            {\begin{tikzpicture}[scale=0.8]
                \tikzstyle{point}=[circle,draw=black,fill=black,inner sep=0pt,minimum width=4pt,minimum height=4pt]        
                \node (v1) at (-0.7,0.2) {};
                \node (v2) at (0.7,0.2) {};
                \node (v3) at (0,-1) {};
                \node (v4) at (0,-4) {};
                \node (v5) at (-0.7,-5.2) {};
                \node (v6) at (0.7,-5.2) {};
                \node (v7) at (0,-2.5) {};
                \node (v8) at (1.4,-2.5) {};
                \node (v9) at (2.6,-1.8) {};
                \node (v0) at (2.6,-3.2) {};

                \begin{scope}[fill opacity=0.45]

                    \filldraw[black, fill=edgecolor, rounded corners] ([shift={ (-0.35,0.25)}] v1.west) -- ([shift={ (0.35,0.25)}] v2.east) -- ([shift={ (0,-0.35)}] v3.south) -- cycle;

                    \filldraw[black, fill=edgecolor, rounded corners] ([shift={ (-0.35,0)}] v8.west) -- ([shift={ (0.15,0.44)}] v9.east) -- ([shift={ (0.15,-0.44)}] v0.east) -- cycle;
                
                \end{scope}
                    
                \begin{scope}[fill opacity=0.7]

                    \node[circle, fill=c1, draw = black, minimum size=7pt, label={[label distance=5pt] above left:1}] (v1) at (-0.7,0.2) {};
                    \node[circle, fill=c1, draw = black, minimum size=7pt, label={[label distance=5pt]above right:2}] (v2) at (0.7,0.2) {};
                    \node[regular polygon, regular polygon sides=3, rotate=180, fill=c6, draw = black, minimum size=7pt, label={[label distance=5pt]above left:0}] (v3) at (0,-1) {};
                    \node[star, fill=c3, draw = black, minimum size=9pt, label={[label distance=4pt]left:3}] (v4) at (0,-4) {};
                    \node[regular polygon, regular polygon sides=3, fill=c5, draw = black, minimum size=6pt, label={[label distance=4pt]below left:4}] (v5) at (-0.7,-5.2) {};
                    \node[regular polygon, regular polygon sides=3, fill=c5, draw = black, minimum size=6pt, label={[label distance=4pt]below right:5}] (v6) at (0.7,-5.2) {};
                    \node[rectangle, fill=c4, draw = black, minimum size=7pt, label={[label distance=4pt]left:6}] (v7) at (0,-2.5) {};
                    \node[regular polygon, regular polygon sides=5, fill=c2, draw = black, minimum size=8pt, label={[label distance=4pt]below left:7}] (v8) at (1.4,-2.5) {};
                    \node[diamond, fill=c7, draw = black, minimum size=7pt, label={[label distance=5pt]above right:8}] (v9) at (2.6,-1.8) {};
                    \node[diamond, fill=c7, draw = black, minimum size=7pt, label={[label distance=5pt]below right:9}] (v0) at (2.6,-3.2) {};

                \end{scope}

                \draw[black, line width=0.2mm, thin] (v3) -- (v7);
                \draw[black, line width=0.2mm, thin] (v4) -- (v5);                      
                \draw[black, line width=0.2mm, thin] (v4) -- (v6);               
                \draw[black, line width=0.2mm, thin] (v5) -- (v6);              
                \draw[black, line width=0.2mm, thin] (v7) -- (v4);              
                \draw[black, line width=0.2mm, thin] (v9) -- (v0);            
                \draw[black, line width=0.2mm, thin] (v7) -- (v8);
                
            \node[label={[text width=4cm, align=center, text centered]below:{Hypergraph Fibres}}] (1) at (0,-6) {};   
                
            \end{tikzpicture}};

         \node[inner sep=0pt] (graph) at (7,0)
            {\begin{tikzpicture}[scale=0.8]
                 \tikzstyle{point}=[circle,draw=black,fill=black,inner sep=0pt,minimum width=4pt,minimum height=4pt]   

                 \begin{scope}[fill opacity=0.7]

                    \node[circle, fill=c1, draw = black, minimum size=7pt, label={[label distance=5pt] above left:1}] (v1) at (-0.7,0.2) {};
                    \node[circle, fill=c1, draw = black, minimum size=7pt, label={[label distance=5pt]above right:2}] (v2) at (0.7,0.2) {};
                    \node[regular polygon, regular polygon sides=3, rotate=180, fill=c6, draw = black, minimum size=11pt, label={[label distance=5pt]above left:0}] (v3) at (0,-1) {};
                    \node[regular polygon, regular polygon sides=3, rotate=180, fill=c6, draw = black, minimum size=11pt, label={[label distance=4pt]left:3}] (v4) at (0,-4) {};
                    \node[circle, fill=c1, draw = black, minimum size=7pt, label={[label distance=4pt]below left:4}] (v5) at (-0.7,-5.2) {};
                    \node[circle, fill=c1, draw = black, minimum size=7pt, label={[label distance=4pt]below right:5}] (v6) at (0.7,-5.2) {};
                    \node[rectangle, fill=c4, draw = black, minimum size=7pt, label={[label distance=4pt]left:6}] (v7) at (0,-2.5) {};
                    \node[regular polygon, regular polygon sides=3, rotate=180, fill=c6, draw = black, minimum size=11pt, label={[label distance=5pt]above right:7}] (v8) at (1.4,-2.5) {};
                    \node[circle, fill=c1, draw = black, minimum size=7pt, label={[label distance=5pt]above right:8}] (v9) at (2.6,-1.8) {};
                    \node[circle, fill=c1, draw = black, minimum size=7pt, label={[label distance=5pt]below right:9}] (v0) at (2.6,-3.2) {};

                \end{scope}

                \draw[black, line width=0.2mm, thin] (v3) -- (v7);
                \draw[black, line width=0.2mm, thin] (v4) -- (v5);                      
                \draw[black, line width=0.2mm, thin] (v4) -- (v6);               
                \draw[black, line width=0.2mm, thin] (v5) -- (v6);              
                \draw[black, line width=0.2mm, thin] (v7) -- (v4);              
                \draw[black, line width=0.2mm, thin] (v9) -- (v0);            
                \draw[black, line width=0.2mm, thin] (v7) -- (v8);            
                \draw[black, line width=0.2mm, thin] (v1) -- (v2);              
                \draw[black, line width=0.2mm, thin] (v2) -- (v3);            
                \draw[black, line width=0.2mm, thin] (v1) -- (v3);            
                \draw[black, line width=0.2mm, thin] (v8) -- (v9);            
                \draw[black, line width=0.2mm, thin] (v8) -- (v0);

               \node[label={[text width=3cm, align=center, text centered]below:{Graph Fibres}}] (2) at  (0,-6) {};   
                    
                \end{tikzpicture}};       

         \node[inner sep=0pt] (multigraph) at (14,0)
            {\begin{tikzpicture}[scale=0.8]
                \tikzstyle{point}=[circle,draw=black,fill=black,inner sep=0pt,minimum width=4pt,minimum height=4pt]   

                \begin{scope}[fill opacity=0.7]

                    \node[circle, fill=c1, draw = black, minimum size=7pt, label={[label distance=5pt] above left:1}] (v1) at (-0.7,0.2) {};
                    \node[circle, fill=c1, draw = black, minimum size=7pt, label={[label distance=5pt]above right:2}] (v2) at (0.7,0.2) {};
                    \node[regular polygon, regular polygon sides=3, rotate=180, fill=c6, draw = black, minimum size=11pt, label={[label distance=5pt]above left:0}] (v3) at (0,-1) {};
                    \node[regular polygon, regular polygon sides=3, rotate=180, fill=c6, draw = black, minimum size=11pt, label={[label distance=4pt]right:3}] (v4) at (0,-4) {};
                    \node[circle, fill=c1, draw = black, minimum size=7pt, label={[label distance=4pt]below left:4}] (v5) at (-0.7,-5.2) {};
                    \node[circle, fill=c1, draw = black, minimum size=7pt, label={[label distance=4pt]below right:5}] (v6) at (0.7,-5.2) {};
                    \node[rectangle, fill=c4, draw = black, minimum size=7pt, label={[label distance=4pt]left:6}] (v7) at (0,-2.5) {};
                    \node[regular polygon, regular polygon sides=5, fill=c2, draw = black, minimum size=9pt, label={[label distance=4pt]below left:7}] (v8) at (1.4,-2.5) {};
                    \node[diamond, fill=c7, draw = black, minimum size=8pt, label={[label distance=5pt]above right:8}] (v9) at (2.6,-1.8) {};
                    \node[diamond, fill=c7, draw = black, minimum size=8pt, label={[label distance=5pt]below right:9}] (v0) at (2.6,-3.2) {};

                \end{scope}

                \draw[black, line width=0.2mm, thin] (v3) -- (v7);
                \draw[black, line width=0.2mm, thin] (v4) -- (v5);                      
                \draw[black, line width=0.2mm, thin] (v4) -- (v6);               
                \draw[black, line width=0.2mm, thin] (v5) -- (v6);              
                \draw[black, line width=0.2mm, thin] (v7) -- (v4);             
                \draw[black, line width=0.2mm, thin] (v7) -- (v8);            
                \draw[black, line width=0.2mm, thin] (v1) -- (v2);              
                \draw[black, line width=0.2mm, thin] (v2) -- (v3);            
                \draw[black, line width=0.2mm, thin] (v1) -- (v3);            
                \draw[black, line width=0.2mm, thin] (v8) -- (v9);            
                \draw[black, line width=0.2mm, thin] (v8) -- (v0);

                \path[color = black, bend left=15, line width=0.2mm, thin] (v9) edge node[above] {} (v0);
                \path[color = black, bend left=15, line width=0.2mm, thin] (v0) edge node[above] {} (v9);

        \node[label={[text width=4cm, align=center, text centered]below:{Multigraph Fibres}}] (2) at  (0,-6) {};  
                    
                \end{tikzpicture}};

        \node[inner sep=2pt, left = 1cm of hpg] (ghostleft) {};
        \node[inner sep=2pt, right = 1cm of graph] (ghostright) {};

    \end{tikzpicture}
    \caption{\textit{Different fibre partitions according to the chosen representation.} Example of fibres for a hypergraph, its projection to a simple graph, and its projections to a graph with multiple edges. In each representation, nodes belonging to the same fibres are pictured with the same color and icon.}\label{fig: fibers_differences}
\end{figure} 

 This behavior is illustrated with an example in \cref{fig: fibers_differences}. The left panel shows a hypergraph, where nodes are grouped into fibres based on their participation in higher-order hyperedges (represented by shaded regions). In the middle panel, the binary graph projection loses both the frequency and the order of these interactions, resulting in a different, less informative fibre structure. The right panel, representing the multigraph projection, captures repeated pairwise connections but still ignores the original hyperedge order, leading to distinct partitioning.

\subsection{Algorithm}
In \cref{sec: cat_eq}, we discussed how, thanks to the equivalence of a hypergraph and its bipartite projection, we can utilize established algorithms, such as those presented in~\cite{kamei_computation_2013}.
In this section, we report an algorithm to obtain the partition of the nodes of an input hypergraph according to the fibres.
The following \cref{algo: fibers_assign} proceeds by projecting the hypergraph into its bipartite representation and then applying an iterative color refinement scheme. The notation and the steps follow the algorithm presented in~\cite{morone_fibration_2020}. Hence, the complexity to assign colors to nodes and hyperedges of a hypergraph $H = (N, E)$ scales as $O(\lvert F \rvert \, log (\lvert N \rvert + \lvert E \rvert))$, where $F$ is the set of edges in the bipartite representation of $H$.

\begin{oss}
    The algorithm of Morone et al.~\cite{morone_fibration_2020} may fail only in the rare presence of multiple disconnected components or distinct nodes without inputs, as shown in counterexample~7.3 of~\cite{leifer_symmetryinspired_2022}.
    In our case, this failure does not occur, since we only consider the undirect case with a single connected component. Moreover, as implemented in our algorithm (see \cref{sec: code}), the initial color dictionary is provided as input to distinguish between original nodes and hyperedges, so assigning distinct initial colors to source nodes would, in any case, prevent such ambiguity.\\
\end{oss}

\begin{algorithm}[H]
\SetAlgoLined
\KwIn{Hypergraph $H = (N, E)$, where $N$ are the nodes and $E$ are the hyperedges.}
\KwOut{$\{c_i\}_{i \in N}$, where $c_i$ is the color of node $i$ in $N$.
$\{c_j\}_{j \in E}$, where $c_j$ is the color of hyperedge $j$ in $E$.}

\LinesNumberedHidden
\textbf{Step 1: Transform the hypergraph into its bipartite graph projection.} \\
$G = (N \sqcup E, F)$ be the bipartite graph, where:
\begin{itemize}[noitemsep]
    \item Nodes of $G$ consist of two disjoint sets: $N$ (nodes of the hypergraph) and $E$ (hyperedges of the hypergraph).
    \item Edges $F$ connect nodes in $N$ to the hyperedges in $E$ they belong to.
\end{itemize}
\textbf{Step 2: Assign starting colors.} \\
\ForEach{node $v \in N \sqcup E$}{
    \uIf{$v \in N$ (nodes of the first layer)}{
        Assign initial color $c_v = 0$.
    }
    \ElseIf{$v \in E$ (nodes of the second layer)}{
        Assign initial color $c_v = 1$.
    }
}

\textbf{Step 3: Apply the iterative coloring algorithm.} \\
$j \gets 0$ \\
$R_0 \gets 1$ \\

\Repeat{$R_j \neq R_{j-1}$}{
    \For{$i = 1$ \KwTo $|N \sqcup E|$, $k = 0, \dots, R_j$}{
        $I_i^k \gets$ number of nodes of color $k$ in the input set of $i$
    }

    $I \gets$ set of all unique $I_i \defeq \{I_i^k\}_k$ (an ordered list of $I_i^k$ ) \\

    \For{$i = 1$ \KwTo $|N|$}{
        $c_i \gets$ index of $I_i$ in $I$ (e.g., if two nodes have the same $I_i$ and $I_j \to c_i = c_j$)
    }

    $j \gets j + 1$ \\
    $R_j \gets |I|$
}

\textbf{Step 4: Extract final colors for nodes of the first layer and for nodes of the second layer.} \\
\Return{$\{c_i \mid i \in N\}$ (colors of the hypergraph nodes), $\{c_j \mid j \in E\}$ (colors of the hyperedges).}

\caption{Finding fibres for a hypergraph via its bipartite graph projection
.}\label{algo: fibers_assign}
\end{algorithm}

\subsection{Fibration symmetries for real-world datasets with HOI}\label{sub: real}
To illustrate the relevance of fibration symmetries beyond theoretical models, we analyse several real-world systems featuring higher-order interactions.
We computed the nodes partition via fibration symmetry for some datasets with higher-order interactions provided by
\href{https://github.com/xgi-org/xgi-data}{XGI dataset}~\cite{landry_xgi_2023}. In addition, we calculated the fibres for an emblematic example of higher-order interactions in real-world situations: collaborations among researchers for publishing a paper in a conference. We considered the MAG-10 dataset~\cite{amburgilya_clustering_2020, sinha_overview_2015}, preprocessed in \url{https://github.com/TheoryInPractice/overlapping-ecc/tree/master/data/MAG-10}, a subset of the Microsoft Academic Graph in which the nodes are the authors (labelled with a code), the hyperedges correspond to a publication by those authors, and the hyperedges are categorised according to one of the 10 computer science conferences, the most
common conference in which they published. In particular, we considered the \textit{International Conference on Machine Learning} (ICML), filtering by code 3, the \textit{International World Wide Web Conference} (WWW), filtering by code 2, and the \textit{International Conference on Computer Vision} (ICCV) with code 8. In this dataset, articles with more than 25 authors are omitted.

\begin{table}[ht]\label{tab: real_dataset}
    \centering
    \begin{tabular}{lllll}
    \toprule
    Dataset & $\lvert N \rvert$ &  $\lvert E \rvert$  &  $\lvert N \rvert /  \lvert C \rvert$ & \# Non-trivial fibres \\
    \midrule    
    diseasome & 516 & 314 & 1.46 & 94 \\
    disgenenet & 12368 & 1672 & 1.39 & 629 \\    
    hospital-lyon & 75 & 1824 & 1.00 & 0 \\
    house-committees & 1290 & 335 & 1.07 & 51 \\
    hypertext-conference & 113 & 2434 & 1.00 & 0 \\
    invs13 & 92 & 787 & 1.00 & 0 \\
    MAG-10 (ICML) & 9981 & 4803 & 2.32 & 1354 \\
    MAG-10 (WWW) & 12889 & 5468 & 2.42 & 1829 \\
    MAG-10 (ICCV) & 11693 & 5367 & 2.30 & 1683 \\
    malawi-village & 84 & 431 & 1.01 & 1 \\
    ndc-classes & 628 & 796 & 1.51 & 165 \\
    ndc-substances & 3414 & 6471 & 1.16 & 248 \\
    plant-pollinator-mpl-015 & 130 & 401 & 1.02 & 2 \\
    plant-pollinator-mpl-016 & 26 & 73 & 1.04 & 1 \\
    plant-pollinator-mpl-049 & 37 & 91 & 1.00 & 0 \\
    plant-pollinator-mpl-062 & 456 & 866 & 1.00 & 0 \\
    science-gallery & 410 & 3350 & 1.00 & 1\\
    \bottomrule
    \end{tabular}
    \caption{\textit{Summary table showing the nodes partition in fibres across different datasets with HOI.} Table reporting for each dataset, the number of nodes $\lvert N \rvert$, the number of edges $\lvert E \rvert$ excluding singletons. Considering $\lvert C \rvert$  as the number of fibres partitioning the nodes via higher-order symmetry fibrations, we report the average number of nodes in each fibre $\lvert N \rvert /  \lvert C \rvert$ and, in the last column, the number of nontrivial fibres, all the fibres formed by more than one node.}
\end{table}

As observed in \cref{obs: comparison_hpg_multi}, for the MAG-10 dataset related to the ICML conference, representing paper collaborations through a hypergraph rather than a multigraph alters the fibres. 
The case depicted in \cref{fig: coauthorship_comparison} illustrates how, in the multi-edge projection, the two authors with codes 7979 and 4508 are placed in the same fibre, despite having collaborated in different ways with the same authors (with codes 4509, 7980). Indeed, while the author 7979 collaborated on one paper with the other two authors 4509, 7980 together, the author 4508 produced two papers in two different collaborations with each of the individual authors. The graph representation with multiple edges flattens this case by not recognizing the different types of collaboration by bringing them together.
An analogue mismatch occurs for the dataset corresponding to the publication in the International World Wide Web Conference. Specifically, at WWW, the author identified as 21331 co-authored two separate papers with authors 21332 and 22750, while the author with code 32176 collaborated with both on a single paper. In the multigraph representation, a fibre includes authors 32176 and 21331, failing to differentiate their distinct publishing contributions.
The same situation occurs for the two pairs of authors with code 18628, 51267 and 2149, 37684 related to ICCV.

\begin{figure}[ht]
    \centering
    \scalebox{0.8}{   \begin{tikzpicture}[scale=1, >=latex]
   \centering{
        \node[inner sep=2pt] (hpg) at (-3,0)
            {\begin{tikzpicture}[scale=1]
                \tikzstyle{point}=[circle,draw=black,fill=black,inner sep=0pt,minimum width=4pt,minimum height=4pt]   
                \node (v1) at (-0.7,0.2) {};
                \node (v2) at (0.7,0.2) {};
                \node (v3) at (0,-1) {};   

                \begin{scope}[fill opacity=0.45]

                    \filldraw[black, fill=edgecolor, rounded corners] ([shift={ (-0.35,0.25)}] v1.west) -- ([shift={ (0.35,0.25)}] v2.east) -- ([shift={ (0,-0.35)}] v3.south) -- cycle;

                \end{scope}
                    
                \begin{scope}[fill opacity=0.8]

                    \node[diamond, fill=c2, draw = black, minimum size=10pt, label={[label distance=7pt] left:4509}, outer sep=2pt] (v1) at (-0.7,0.2) {};
                    \node[star, fill=c3, draw = black, minimum size=7pt, label={[label distance=7pt]right:7980}, outer sep=2pt] (v2) at (0.7,0.2) {};
                    \node[circle, fill=c1, draw = black, minimum size=7pt, label={[label distance=7pt]below:7979}, outer sep=2pt] (v3) at (0,-1) {};
                    \node[rectangle, fill=c5, draw = black, minimum size=7pt, label={[label distance=7pt]above:4508}, outer sep=2pt] (v4) at (0,1.4) {};

                    \draw[black, line width=0.2mm, thin]  (v4) -- (v1);
                    \draw[black, line width=0.2mm, thin]  (v4) -- (v2);

                    \path (v4) -- (v1) coordinate[pos=0.5] (e1);
                    \path (v4) -- (v2) coordinate[pos=0.5] (e2);
        
                    \node[inner sep=0pt, fill = white, line width=0.1pt] at (e1) {\footnotesize\faFile[regular]};
                    \node[inner sep=0pt, fill = white, line width=0.1pt] at (e2) {\footnotesize\faFile[regular]};
        
                    \node[inner sep=0pt, fill = white, line width=0.1pt] at (0,-0.27) {\footnotesize\faFile[regular]};

                    \node[inner sep=0pt, fill = white, line width=0.1pt]  (ghost) at (1.5,0.8) {};
                    \node[inner sep=0pt, fill = white, line width=0.1pt]  (ghost2) at (1.5,-0.4) {};

                    \draw[line width=0.1mm, dashed] (v2) -- (ghost) ;
                    \draw[line width=0.1mm, dashed] (v2) -- (ghost2);

                \end{scope}

            \node[label={[text width=4cm, align=center, text centered]below:{Hypergraph\\[-1pt]Coauthors Fibres}}] (1) at (0,-2.5) {};   
                
            \end{tikzpicture}};

         \node[inner sep=0pt] (graph) at (3,0)
            {\begin{tikzpicture}[scale=1]
                \tikzstyle{point}=[circle,draw=black,fill=black,inner sep=0pt,minimum width=4pt,minimum height=4pt]   
                    
                \begin{scope}[fill opacity=0.8]            

                    \node[diamond, fill=c2, draw = black, scale=10, label={[label distance=7pt] left:4509}] (v1) at (-0.7,0.2) {};
                    \node[star, fill=c3, draw = black, scale=9, label={[label distance=7pt]right:7980}] (v2) at (0.7,0.2) {};
                    \node[circle, fill=c1, draw = black,scale=7, label={[label distance=7pt]below:7979}] (v3) at (0,-1) {};
                    \node[circle, fill=c1, draw = black,scale=7, label={[label distance=7pt]above:4508}] (v4) at (0,1.4) {};

                    \draw[black, line width=0.2mm, thin] (v4) -- (v1);
                    \draw[black, line width=0.2mm, thin] (v4) -- (v2);     
                    \draw[black, line width=0.2mm, thin] (v1) -- (v3);
                    \draw[black, line width=0.2mm, thin] (v2) -- (v3);    
                    \draw[black, line width=0.2mm, thin] (v1) -- (v2);   

                    \path (v4) -- (v1) coordinate[pos=0.5] (e1);
                    \path (v4) -- (v2) coordinate[pos=0.5] (e2);
                    \path (v1) -- (v3) coordinate[pos=0.5] (e3);
                    \path (v2) -- (v3) coordinate[pos=0.5] (e4);
                    \path (v1) -- (v2) coordinate[pos=0.5] (e5);
        
                    \foreach \e in {e1,e2,e3,e4,e5} {
                        \node[inner sep=0pt, thin, fill = white, line width=0.1pt] at (\e) {\footnotesize\faFile[regular]};
                    }

                    \node[inner sep=0pt, fill = white, line width=0.1pt]  (ghost) at (1.5,0.8) {};
                    \node[inner sep=0pt, fill = white, line width=0.1pt]  (ghost2) at (1.5,-0.4) {};

                    \draw[line width=0.1mm, dashed] (v2) -- (ghost) ;
                    \draw[line width=0.1mm, dashed] (v2) -- (ghost2);

                \end{scope}

                
               \node[label={[text width=4cm, align=center, text centered]below:{Multigraph\\[-1pt]Coauthors Fibres}}] (2) at  (0,-2.5) {};  
                    
                \end{tikzpicture}};

        \node[inner sep=2pt, left = 3cm of hpg] (ghostleft) {};
        \node[inner sep=2pt, right = 3cm of graph] (ghostright) {};
  
    }
\end{tikzpicture}} 
    \caption{\textit{Visualization of different fibre partitions based on the representation of the co-author relationship.} Example of different fibres between the hypergraph representation and its multigraph projection in a real-world case for collaboration on publications for the ICML conference (MAG-10 dataset filtered for ICML conference). Nodes represent authors, and paper collaborations are depicted with a hyperedge among the authors, stressed with a paper icon. Nodes painted with the same color and mark belong to the same fibre. For the sake of the example, in this image, we represent only the papers related to authors labeled with codes 7979 and 4508. Nodes 4509 and 7980 are in different fibres in both cases as they are related (illustrated with dashed lines) to different other nodes not present in the picture.}\label{fig: coauthorship_comparison}
\end{figure}

\section{Cluster synchronization}
Having established the structural foundations through definitions and illustrative examples of fibrations for higher-order interactions, we now turn to the dynamical aspects of the system. 
To ensure compatibility between the dynamics and the underlying hypergraph structure, we consider admissible ODE systems~\cite{aguiar_network_2023, vondergracht_hypernetworks_2023}, where the evolution of each node depends only on the structure-prescribed interactions. The higher-order Kuramoto model with frustration (Kuramoto-Sakaguchi) falls within this class, as its dynamics respect the hypergraph topology and involve only interactions explicitly defined by the system’s combinatorial structure.
In this section, we introduce the dynamical model and examine how the underlying structural features influence its behavior. We aim to connect the structural characterization of robust synchrony due to Von der Gracht et al.\ and Aguiar et al. with the behavior of the higher-order Kuramoto dynamics, showing in particular that synchrony in this model with homogeneous initial conditions and natural frequencies is constrained to the fibration partition.

\subsection{Higher-order Kuramoto model with phase-lag parameters}
We study synchronization dynamics in oscillator populations using a Kuramoto model with both pairwise and three-body interactions. Although our framework refers broadly to higher-order interactions, we focus on two- and three-body terms as the minimal setting that captures collective behaviors beyond standard pairwise coupling. Our primary interest lies in cluster and remote synchronization, where phase alignment of identical oscillators occurs only within specific groups of nodes, rather than across the entire network. The inclusion of three-body interactions follows the principled derivation of León et al.~\cite{leon_higherorder_2024}, who used phase reduction techniques to show that such interactions naturally arise in the form
$\sin\big(\theta_{\gamma} + \theta_{\delta}- 2\theta_{a} -\alpha \big)$ for an oscillator $a$. While León et al.’s formulation assumes all-to-all coupling, we generalize it to arbitrary network topologies, in line with Muolo et al.~\cite{muolo_when_2025}. Crucially, we retain the phase-lag parameter, which prevents trivial global synchronization and enables the network to exhibit nontrivial collective states, even with identical oscillators starting with identical initial conditions.\\

\paragraph*{Model}
Consider a hypergraph $H = (N, E)$, with rank $rk(H)\leq 3$. Define the incidence matrix of order $m$ with $m \in \{2,3\}$ as the matrix $I^{(m)}$ with as many rows as the cardinality of $N$ and as many columns as the cardinality of $\mathcal{E}^{(m)} = \big\{ e \in E \mid \lvert e \rvert = m \big\}$, that is the set of hyperedges of order $m$.
    Then $\forall i \in N, e \in E$:
    \begin{equation*}
        [I^{(m)}]_{i,e} = 
            \begin{cases}
    			1 & \text{if}\,\, i \in e\\
                0 & \text{otherwise}
    		\end{cases}
    \end{equation*}

The evolution of each node $i \in N$ can be expressed as follows:
    \begin{align}\label{eq: dynamics_frustration}
    	\dot{\theta}_i = \omega_i + \sum\limits_{m=2}^{3}\sigma_m\sum\limits_{e\in \mathcal{E}^{(m)}} [I^{(m)}]_{i,e} \sin{\Big(\sum_{j \in e\setminus \{i\}}\theta_j - (m-1) \theta_{i} -\alpha^{(m)} \Big)},
    \end{align}        
where $\sigma_m$ is the strength of the $m$th order coupling and $\omega_i$ is the natural frequency of the oscillator associated with the node. Angles $\alpha^{(m)} \in (0,\,\pi/2)$ are the phase frustration parameters for each order $m$. The frustration parameters prevent the system from achieving global synchronization (unless all nodes belong to a single trivial cluster), instead allowing the emergence of partial or cluster synchronization patterns. To isolate the role of the network structure and frustration in shaping these patterns, we will assign identical initial conditions and natural frequencies to all nodes. This choice reflects an anonymity condition in the sense of~\cite{makse_symmetries_2025}, ensuring that any observed symmetry breaking arises purely from the dynamics and topology, rather than from imposed heterogeneity. 

\begin{oss}
We observe that the coupling function of the Kuramoto model for identical oscillators with phase-lags $\alpha^{(m)} = 0$ for all $m$ is non-invasive (i.e., evaluated in $\theta_i = \theta^*$ for all $i$ is identically 0). Hence, a solution exists for global system synchronization. 
By inserting non-zero $\alpha^{(m)}$ instead, we avoid this case.
\end{oss}

\paragraph*{Synchronization measures}
A common way to measure coherence among a subset of nodes evolving according to a Kuramoto model is to compute the Kuramoto order parameters.
We will compare global and cluster order parameters to analyse their evolution in time.
\begin{defn}[Kuramoto Order parameters]
The global Kuramoto order parameter for oscillators associated to nodes in a hypergraph $H = (N, E)$ is defined as
\begin{equation*}
R(t) = \left| \frac{1}{\lvert N \rvert} \sum_{j\in N} e^{i\theta_j(t)} \right|
\end{equation*}
where $R(t) \in [0, 1]$ quantifies the overall phase coherence of the system.
Given a cluster $ C_\alpha \subseteq N $, the local order parameter is
\begin{equation*}
R_\alpha(t) = \left| \frac{1}{|C_\alpha|} \sum_{j \in C_\alpha} e^{i\theta_j(t)} \right|
\end{equation*}
where $ R_\alpha(t) \in [0, 1] $ measures the synchronization within cluster $ C_\alpha $.
\end{defn}

The Kuramoto order parameter is a measure of phase coherence among a group of oscillators. It takes values between 0 and 1, where 1 indicates perfect synchronization, all nodes are in phase, while values closer to 0 reflect increasing incoherence.

\cref{fig: frustration_synt,fig: frustration_real_diseasome} present the temporal evolution of global and local order parameters for fibre configurations across two hypergraph architectures: synthetic networks, and the diseasome.
In all the images, we can see that at $t=0$ all the parameters are 1, as the initial conditions are set equal for all nodes. During the evolution, local order parameters for fibre clusters remain constant at unity across all hypergraph types, while global order parameters exhibit deviations from unity. This divergence indicates that global coherence breaks down despite the preservation of local cluster organization, indicating the influence of network topology on these dynamics.

\begin{figure}[!ht]
    \centering
    \includegraphics[width=0.8\linewidth]{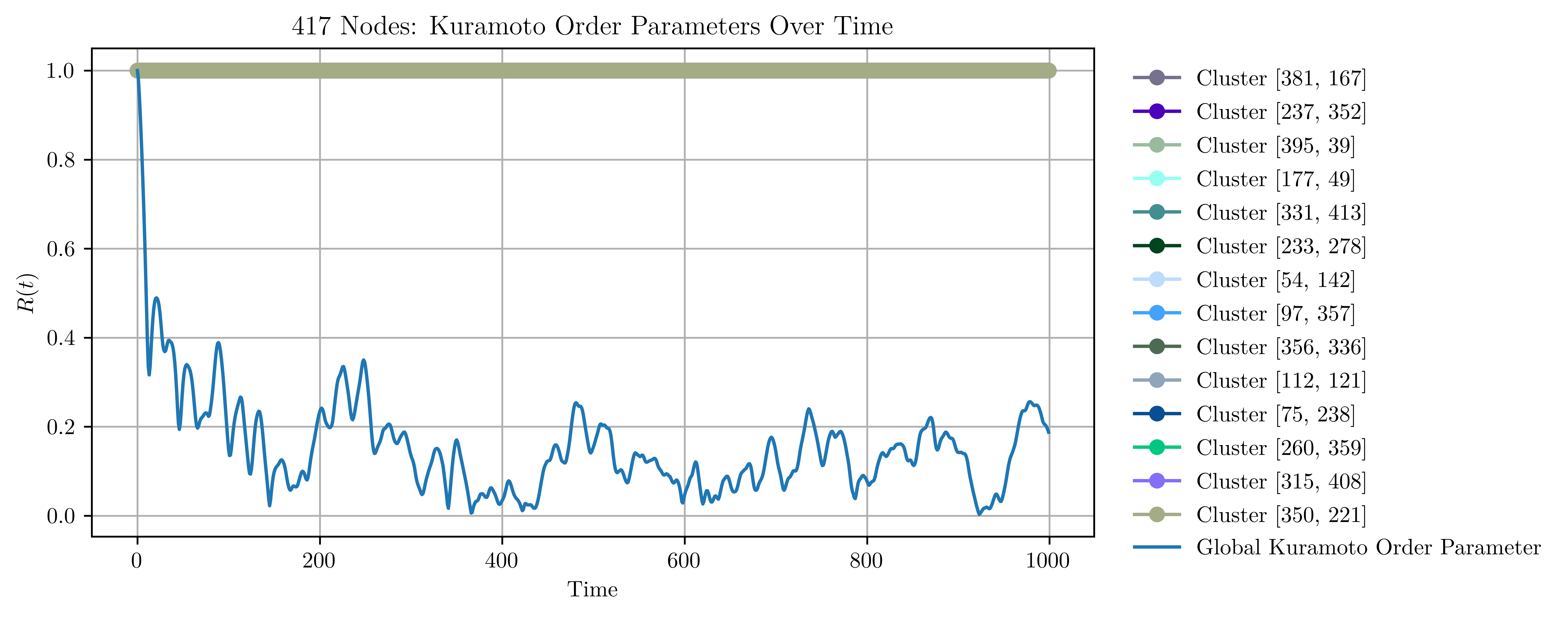}
    \caption{\textit{Cohesiveness measure in fibres and globally for a synthetic hypergraph.} The Kuramoto order parameters $R$ for different fibres and for the whole hypergraph for a higher-order Kuramoto model with frustration (\cref{eq: dynamics_frustration} with $\lvert N \rvert = 417$, $\alpha = \pi/6$ and $(\sigma_2, \sigma_3) = (0.2,0.6)$). When $R$ is closer to 1, the phases of all nodes are more synchronized. The local order parameters for all fibres overlap and are identically 1. The blue thin line is the global order parameter.}\label{fig: frustration_synt}
\end{figure}

\begin{figure}[!ht]
    \centering
    \includegraphics[width=0.6\linewidth]{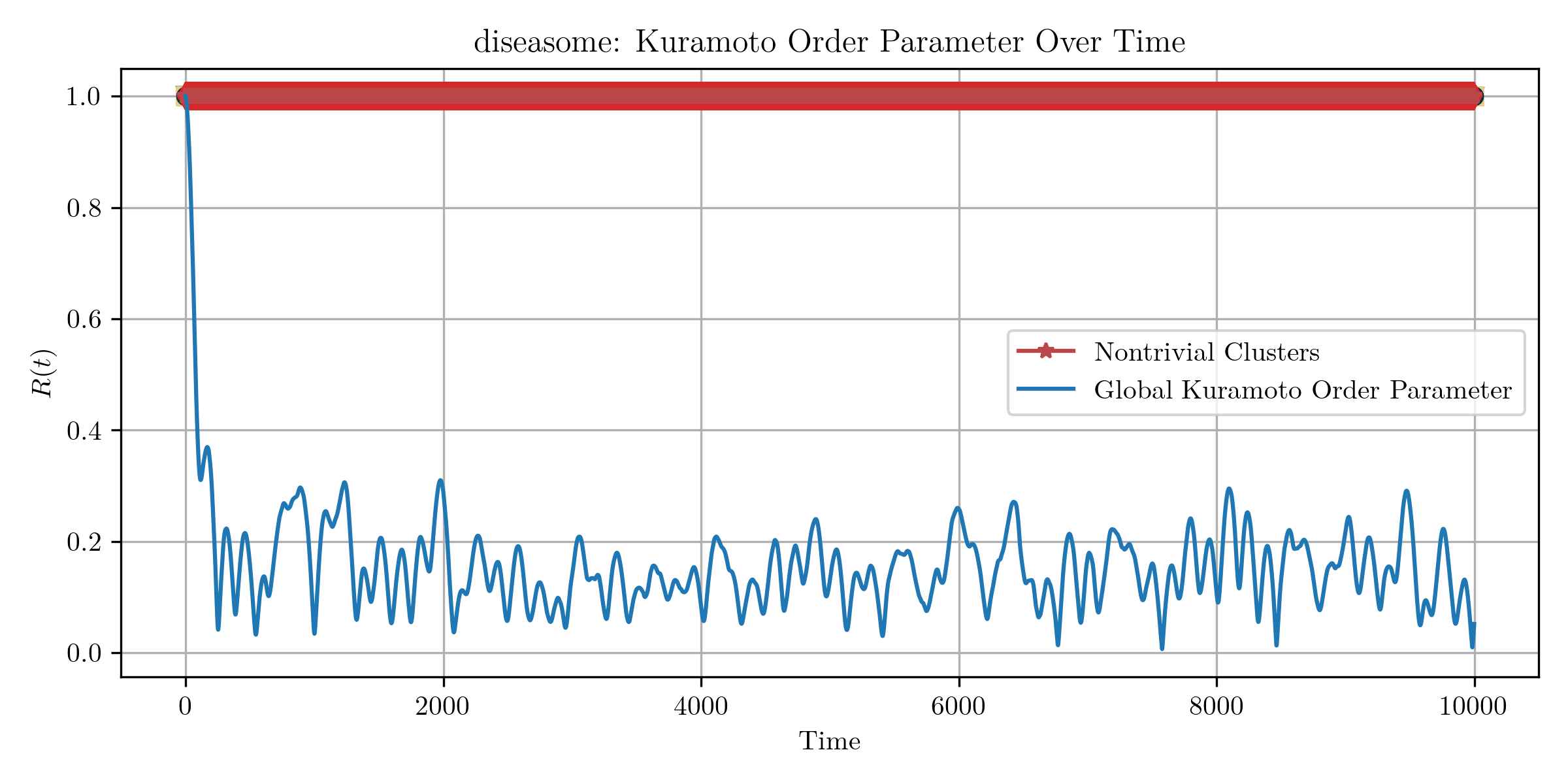}
    \caption{\textit{Cohesiveness measure in fibres and globally for the diseasome dataset.} The Kuramoto order parameters $R$ for the nontrivial fibre and for the whole hypergraph with a higher-order Kuramoto model with frustration (\cref{eq: dynamics_frustration_fibers}, with $\alpha = \pi/6$). The topology is from the largest connected component without duplicate edges of the \textit{diseasome} dataset (94 nontrivial clusters)~\cite{goh_human_2007}. In this dataset, a disease is a node, and a gene is a hyperedge. When $R$ is closer to 1, the phases of all nodes are more synchronized. In red, the overlapped fibre order parameters are identically equal to 1 for all 94 non-trivial fibres.}\label{fig: frustration_real_diseasome}
\end{figure}

\subsection{Fibration and Kuramoto synchrony partitions in the Kuramoto model}
Let's analyse the relations between the nodes' partition of a hypergraph in fibres and a Kuramoto synchrony partition starting from identical initial conditions and natural frequencies for all nodes. We first recall that the implication \textit{fibration partition $\Rightarrow$ Kuramoto synchrony partition} (\cref{thm: fibres_synch}) follows directly from the general theory~\cite{aguiar_network_2023} as the vector field associated with our model is admissible. We include a proof in the specific Kuramoto setting for completeness. The second implication proved in \cref{thm: synch_fibres_identicalIC} addresses the converse direction: in our restricted Kuramoto framework with homogeneous initial conditions and natural frequencies, any Kuramoto synchrony partition must coincide with the fibration partition, except on a parameter set of measure zero. To introduce the notion of a subspace of the hypernetwork’s total phase space characterized by the equality of node phases, we define a Kuramoto synchrony partition of the nodes according to their trajectories. We generalize the definition of $\alpha$-Kuramoto partition~\cite{kirkland_alphakuramoto_2013} to emphasize the dynamical nature of these partitions in the context of the Kuramoto model with frustration.

\begin{defn}[Kuramoto synchrony partition]
    Given an undirected hypergraph $H=(N,E)$ with rank $rk(H)\leq 3$, take a set of $2$ parameters $\{\alpha^{(m)}\}_{m \in \{2,3\}}$ with every $\alpha^{(m)} \in (0,\pi/2)$. Consider the trajectories $\theta_i$ for the nodes resulting from the \cref{eq: dynamics_frustration} with parameters $\{\alpha^{(m)}\}_{m}$, $(\sigma_2, \sigma_3) \neq (0,0)$ identical natural frequencies $\omega_i = \omega$ and initial conditions $\theta_i(0) = \theta^0_i, \quad i \in N$ for all nodes. We call Kuramoto synchrony partition the node partition $N= S_1 \sqcup \dots \sqcup S_h$ such that:
    \begin{itemize}
        \item $i, j \in S_a$ if $\theta_i(t) = \theta_j(t) \,\, \forall t \geq 0$.
        \item if $i\in S_a$ and $\ell \in S_b$ with $a \neq b$, then there exists some $t \geq 0$ such that $\theta_i(t) \neq \theta_\ell(t)$.        
    \end{itemize}
\end{defn}

In a nutshell, nodes with the same trajectories for all the process are in the same cluster, while nodes with different trajectories must be in different clusters. Observe that belonging to the same cluster results in the same trajectory for all $t$, which we can indicate with the cluster index $\theta_i(t) = \theta_a (t)$ for all nodes $i$ in cluster $S_a$. We can reduce the number of frustration parameters to just one $\alpha$ angle belonging to $(0,\pi/2)$, as non-zero frustration is sufficient to prevent global synchronization and avoid non-invasiveness of the coupling function.
As we can see in the example in \cref{fig: frustration_heatmap}, having non-null frustration parameters prevents global synchronization throughout time.
Hence, for simplicity, for all $m$, we can set $\alpha^{(m)} = \alpha$.

\begin{figure}[!ht]
    \centering
    \includegraphics[width=0.56\linewidth]{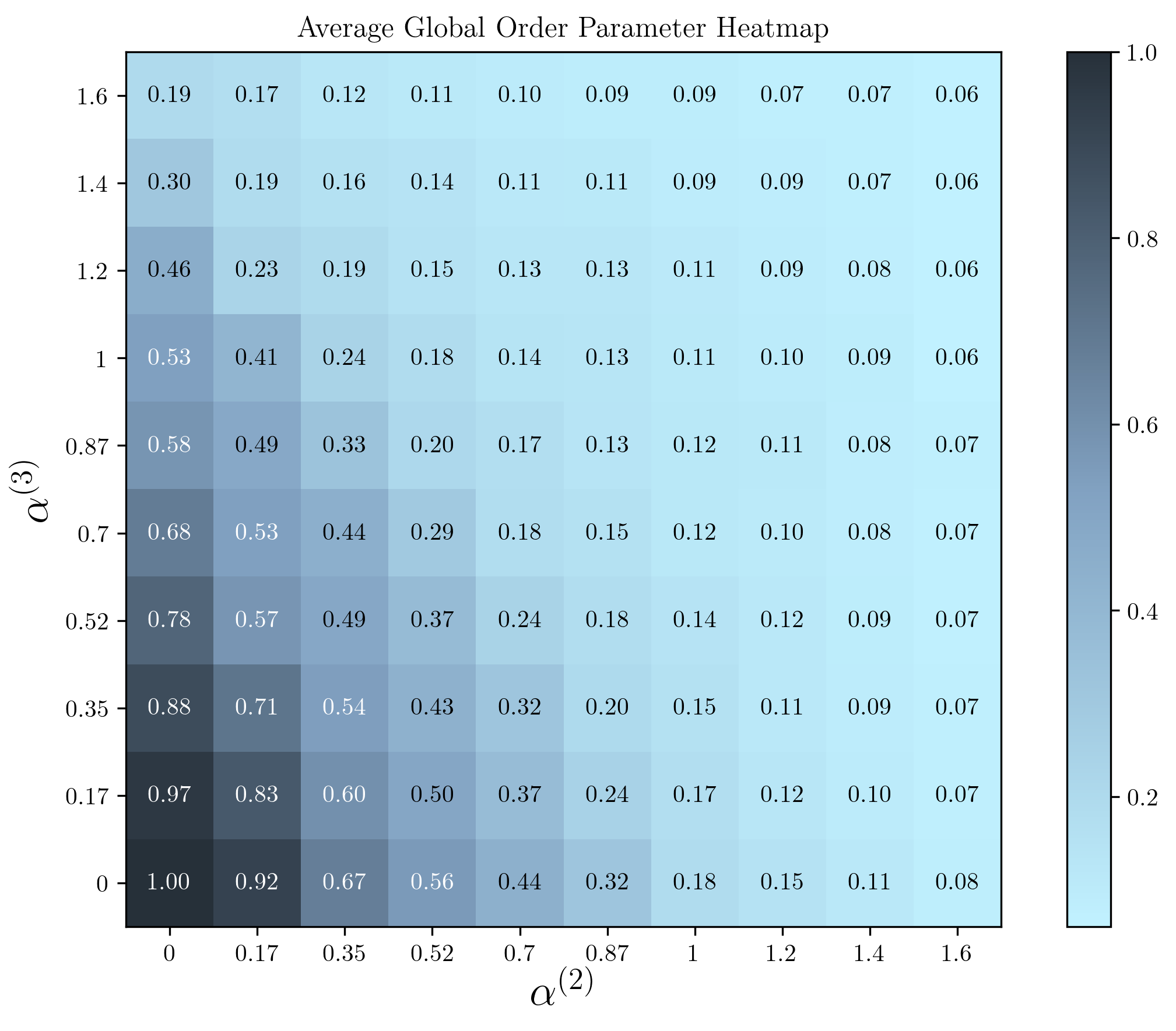}
    \caption{\textit{Global order parameters through varying coupling force parameter options.} The average global order parameter $\langle R(t) \rangle_t$  computed over 500 time steps when changing the frustration parameters of \cref{eq: dynamics_frustration} with $\alpha_2, \alpha_3 \in [0, \pi/2)$. When $\langle R(t) \rangle_t$ approaches 1, it indicates near-complete phase alignment across all nodes.}\label{fig: frustration_heatmap}
\end{figure}

We can also restate the evolution for a node $i$ in the cluster $S_a$ of the Kuramoto synchrony partition (\cref{eq: dynamics_frustration}), emphasizing the node's membership in a cluster.
\begin{align}\label{eq: dynamics_frustration_fibers}
    \dot{\theta}_i = \omega_i + \sigma_2\sum\limits_{\beta=1}^h k^{(2)}_{i\,\beta} \sin\big({\theta_{\beta} - \theta_{a} -\alpha \big)} + \sigma_3\sum\limits_{\gamma = 1} ^h\sum\limits_{\delta = \gamma }^h k^{(3)}_{i\,\gamma \,\delta} \sin\big(\theta_{\gamma} + \theta_{\delta}- 2\theta_{a} -\alpha \big).
\end{align}    

where the double sum with $\delta \ge \gamma$ avoids double-counting symmetric pairs of clusters in three-body interactions, after fixing an arbitrary ordering of the clusters $\{S_1, \dots, S_h\}$. We denote $k^{(m)}_{i\,\gamma_1 \cdots \,\gamma_{m-1}}$ as the number of nodes sharing a hyperedge of order $m$ with node $i$ and belonging to the clusters specified by the remaining $m-1$ indices of $k$.
It can be considered as the degree of the node $i$ of order $m$ linking $i$ to the different subsets of clusters.
We can prove the first implication by generalizing the proof of~\cite{kirkland_alphakuramoto_2013}. This theorem can also be simply derived from the general theory of~\cite{aguiar_network_2023}.

\begin{thm}[Fibration partition $\Rightarrow$ Kuramoto synchrony partition]\label{thm: fibres_synch} 
Let $H=(N,E)$ be a connected hypergraph with rank at most 3. Consider the fibration-partition $\{C_1, \dots, C_h\}$ of the nodes given by the fibres of the hypergraph. Then for any $\alpha \in (0,\pi/2)$, $(\sigma_2, \sigma_3) \neq (0,0)$ the fibration-partition is a Kuramoto synchrony partition.
\end{thm}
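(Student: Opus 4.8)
The plan is to realize the fibration partition as a flow-invariant \emph{synchrony subspace} and then combine its invariance with uniqueness of solutions of the Cauchy problem. Concretely, I would introduce the polydiagonal
\[
\Delta = \{\, \theta \in \mathbb{R}^{\lvert N\rvert} : \theta_i = \theta_j \text{ whenever } i \text{ and } j \text{ lie in the same fibre } C_a \,\},
\]
and prove the forward implication in three steps: first, that the prescribed initial condition lies in $\Delta$; second, that the vector field defined by \cref{eq: dynamics_frustration} is tangent to $\Delta$; and third, that these two facts, together with uniqueness, force the whole trajectory to remain in $\Delta$ for all $t \ge 0$. Since a trajectory confined to $\Delta$ has $\theta_i(t)=\theta_j(t)$ for every same-fibre pair, this is exactly the statement that the fibres refine the Kuramoto synchrony classes, i.e.\ that the fibration partition is a synchrony partition in the sense of the first defining bullet (the divergence of distinct clusters being the content of \cref{thm: synch_fibres}).

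The first and third steps are routine. Because all nodes are assigned the same initial phase $\theta^0$ and the same natural frequency $\omega$, the initial point satisfies $\theta_i(0)=\theta_j(0)$ for every pair and hence lies in $\Delta$ (in fact on the full diagonal). For the third step, the coupling function in \cref{eq: dynamics_frustration} is a finite sum of sines of affine functions of the phases, hence smooth and in particular locally Lipschitz, so Picard--Lindel\"of provides a unique solution. Solving the reduced dynamics on the cluster phases furnishes an integral curve lying in $\Delta$ with the correct initial value; by tangency this curve also solves the full system, and uniqueness identifies it with the genuine solution, so the latter never leaves $\Delta$. (In the admissible-vector-field language of~\cite{vondergracht_hypernetworks_2023, aguiar_network_2023} this invariance is automatic; here we record the direct Kuramoto argument, generalizing~\cite{kirkland_alphakuramoto_2013}.)

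The crux is the second step, the tangency of the vector field to $\Delta$. Writing the dynamics in the cluster form of \cref{eq: dynamics_frustration_fibers} and restricting to $\Delta$, every node of a cluster $C_b$ carries the common phase $\theta_b$, so for a node $i \in C_a$ the right-hand side reduces to
\[
\dot\theta_i = \omega + \sigma_2 \sum_{\beta} k^{(2)}_{i\,\beta}\sin\big(\theta_\beta - \theta_a - \alpha\big) + \sigma_3 \sum_{\gamma}\sum_{\delta \geq \gamma} k^{(3)}_{i\,\gamma\,\delta}\sin\big(\theta_\gamma + \theta_\delta - 2\theta_a - \alpha\big).
\]
For two nodes $i,j \in C_a$ the self-phase $\theta_a$, the frequency $\omega$, and the frustration $\alpha$ already coincide, so tangency ($\dot\theta_i=\dot\theta_j$ on $\Delta$) reduces to the purely combinatorial identities $k^{(2)}_{i\,\beta}=k^{(2)}_{j\,\beta}$ and $k^{(3)}_{i\,\gamma\,\delta}=k^{(3)}_{j\,\gamma\,\delta}$ for all clusters.

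I expect this last identity to be the main obstacle, because it is precisely the point where the structural definition of a fibre must be converted into equal coupling coefficients. I would argue that two nodes in the same fibre of the incidence bipartite graph have isomorphic input sets, and that this isomorphism, transported back to the hypergraph, matches each hyperedge through $i$ with a hyperedge through $j$ of equal order whose remaining endpoints lie in the same fibres; this yields the equality of the counts $k^{(m)}$ for every order and every tuple of clusters. This is exactly the balanced-coloring (equitable-partition) property characterizing the minimal fibration partition, and making the correspondence between the iterative bipartite fibre refinement and the hypergraph input multiset fully rigorous is the delicate part of the argument.
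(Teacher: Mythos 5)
Your proposal is correct and follows essentially the same route as the paper's proof: both arguments rest on the combinatorial fact that same-fibre nodes share all generalized degrees $k^{(m)}$ toward every tuple of clusters (the balanced/equitable property of the fibration partition, which the paper likewise asserts rather than rederives from the bipartite refinement), and both then invoke Picard uniqueness with the homogeneous initial data — your ``tangency of the vector field to the polydiagonal plus invariance by uniqueness'' is just a rephrasing of the paper's ``lift the unique solution of the reduced cluster system to the full system.'' No substantive difference and no gap beyond the one you already flag, which the paper leaves at the same level of rigor.
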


\begin{proof}
    Consider a system of equations associated with the fibres $C_a$ for $a \in \{1, \dots , h\}$ with initial conditions $x_a(0) = x_0$ and natural frequency $\omega$:

\begin{align*}
\dot{x}_a = \omega + \sigma_2\sum\limits_{\beta=1}^h k^{(2)}_{a\,\beta} \sin\big({x_{\beta} - x_{a} -\alpha \big)} + \sigma_3\sum\limits_{\gamma = 1} ^h\sum\limits_{\delta = \gamma }^h k^{(3)}_{a\,\gamma \,\delta} \sin\big({x_{\gamma} + x_{\delta}- 2x_{a} -\alpha \big)}.
\end{align*}

We denote with $k^{(2)}_{a\,\beta}$ (resp. $k^{(3)}_{a\,\gamma \,\delta}$) the number of hyperedges of order 2 (resp. 3) shared by nodes in fibre $C_a$ and $C_b$ (resp. $C_a$, $C_\gamma$ and $C_\delta$). By the property of the fibration partition, all nodes $i, j \in C_a$ satisfy $k^{(m)}_{i,\beta} = k^{(m)}_{j,\beta}$ for all $\beta, m$, which justifies the notation $k^{(m)}_{a,\beta}$ for the common value. Since the fibration partition is the coarsest equitable partition, it is not possible to have $k^{(2)}_{i\,\beta} = k^{(2)}_{j\,\beta}$ for all $\beta$ if $i,j$ are not in the same fibre. Since the evolution equation has continuous partial derivatives with respect to every $x_a$, by Picard's theorem of existence and uniqueness of the Cauchy problem, every differential equation has a unique solution associated with the initial condition.    
    Now consider the system of $\lvert N \rvert$ \cref{eq: dynamics_frustration}. Set the same initial condition and natural frequencies for each node in the hypergraph $(\theta_i(0), \omega_i)= (x_0, \omega)$. For each node $i \in C_a$, we consider the equation:
    
    \begin{equation*}
        \dot{\theta}_i = \omega + \sigma_2\sum\limits_{\beta=1}^h k^{(2)}_{i\,\beta} \sin\big({\theta_{\beta} - \theta_{a} -\alpha \big)} + \sigma_3\sum\limits_{\gamma = 1} ^h\sum\limits_{\delta = \gamma }^h k^{(3)}_{i\,\gamma \,\delta} \sin\big({\theta_{\gamma} + \theta_{\delta}- 2\theta_{a} -\alpha \big)}
    \end{equation*}
    The same uniqueness argument applies here. All nodes $i \in C_a$ share the coefficients $k^{(m)}_{a,\beta}$ and thus satisfy the same ODE. Therefore, $\theta_i(t) = x_a(t)$ is the unique solution for each $a \in \{1,\dots\,h\}$, for all $i \in C_a$ and $t \geq 0$, i.e., all nodes in the same fibre evolve identically.
\end{proof}                                
    
\begin{oss}
    When we assign identical initial conditions to all nodes, we eliminate the case of more refined equitable partitions (EP). This is because in finer partitions, nodes that share the same input tree structure could end up in separate clusters created by the EP if the natural frequencies or the initial conditions are different. However, since these nodes follow identical evolution equations and start with the same initial conditions, they would naturally belong to the same cluster under the Kuramoto synchrony partition.
    This means that nodes artificially separated into different clusters would still synchronize with each other. Our objective is to identify the largest possible clusters where all nodes within each cluster achieve synchronization. Therefore, with identical initial conditions, the Kuramoto synchrony partition coincides with the coarsest equitable partition (the fibration), avoiding the degenerate case of global synchronization.
\end{oss}

\begin{oss}\label{obs: degree_uniformity}
    Observe that nodes in the same cluster for the Kuramoto synchrony partition have the same degree for all orders.
    Indeed, let $S_1 \sqcup \dots \sqcup S_j = N$ be the Kuramoto synchrony partition.
    Let $i$ and $j$ belong to the same cluster; by definition, they have the same trajectory and hence the same ODE for all $t \geq 0$.
    We indicate with $k_i^{(m)}$ the generalized degree of order $m$ for node $i$, i.e.\ the number of hyperedges of order $m$ containing the node $i$.
    Their difference equation at $t=0$ is the following:
    \begin{equation*}
    \dot{\theta}_i(0) - \dot{\theta}_j(0) = -\sin{\alpha}\sum_{m=2}^3 \sigma_m (k_i^{(m)}-k_j^{(m)}) = 0.
    \end{equation*}
    Starting from the same initial conditions and natural frequencies for all nodes in the hypergraph, the only way to obtain $\dot{\theta_i}(0) = \dot{\theta_j}(0)$ for all pair of nodes $i,j \in C_a$ and independently of the $\{\sigma_m\}_m$, is to have the same degree sequence, as the frustration parameter belongs to the interval $(0,\pi/2)$.
    Namely:
    \begin{equation*}
        k_i^{(m)} = k_j^{(m)} \,\,\,\forall m, \,\forall i,j \in C_a. 
    \end{equation*}
    Since trajectories remain identical for all $t > 0$, this structural degree condition is necessary for synchrony under our dynamics.
\end{oss}

Kirkland et al.\ in~\cite{kirkland_alphakuramoto_2013} prove that all equitable partitions in graphs are Kuramoto synchrony partitions (called $\alpha$-Kuramoto partitions). However, the reverse does not always hold in their setting. Starting from initial conditions that differ for each partition, graphs can exhibit synchronization among nodes with specific degree relations, even if they do not belong to the same cluster of the equitable partition. How does this extend to hypergraphs where nodes share identical initial conditions and natural frequencies? We now prove that a Kuramoto synchrony partition for our model and homogeneous setting must coincide with a fibration partition, apart from exceptional cases. The result shows that, for generic coupling parameters, any non-fibration synchrony is destroyed instantaneously by a mismatch in incidence patterns. Only nodes with identical higher-order coupling profiles (i.e.\ those in the same fibre) can sustain exact synchrony.

We introduce some lemmas needed to prove the main result.

\begin{lem}\label{lem: sync_all_derivs} Let $\theta(t)$ be a $C^\infty$ solution of~\cref{eq: dynamics_frustration_fibers}.  If for some nodes $i,j$ one has $\theta_i(t)=\theta_j(t)$ for all $t\ge0$, then for $\theta_{ij}(t):=\theta_i(t)-\theta_j(t)$ we have
\begin{equation*}
\theta_{ij}^{(r)}(0)=0\qquad \forall r\ge0.
\end{equation*}
\end{lem}

\begin{proof}
Since $\theta_i(t)=\theta_j(t)$ for all $t\ge0$, we have $\theta_{ij}(t)\equiv 0$ on $[0,\infty)$. Therefore, $\theta_{ij}$ is identically zero in a neighborhood of $t=0$, and hence all its derivatives at $t=0$ vanish.
\end{proof}

\begin{lem}\label{lem: fibre_finite_refinement}
Let $H=(N,E)$ be a hypergraph and consider its associated incidence bipartite graph $B_H=(N\sqcup E, F)$. Let $\{\mathcal P^{(r)}\}_{r\ge0}$ be the sequence of partitions produced by the color refinement \cref{algo: fibers_assign} applied to $B_H$, starting from the initial 2-coloring distinguishing nodes from hyperedges:
\begin{equation*}
\mathcal P^{(0)} = \{N,E\}.
\end{equation*}
This sequence stabilizes to a partition $\mathcal P^{(\infty)}$, which is the fibration partition (equivalently, the coarsest equitable partition of $B_H$ respecting the initial coloring). If nodes $i,j\in N$ belong to different blocks of $\mathcal P^{(\infty)}$, then there exists $r^*\ge1$ such that $i$ and $j$ belong to different blocks of $\mathcal P^{(r^*)}$.
\end{lem}

\begin{proof}
The color refinement algorithm produces a monotone sequence
\begin{equation*}
\mathcal P^{(0)}\succeq \mathcal P^{(1)}\succeq \mathcal P^{(2)}\succeq \cdots
\end{equation*}
where $\mathcal Q\succeq \mathcal R$ means $\mathcal R$ is a refinement of $\mathcal Q$ (every block of $\mathcal R$ is contained in some block of $\mathcal Q$). At each step, two nodes $u,v$ in the same block of $\mathcal P^{(r)}$ are separated in $\mathcal P^{(r+1)}$ if and only if their multisets of neighbor colors in $\mathcal P^{(r)}$ differ.
Since $N\sqcup E$ is finite and each refinement step either strictly refines the partition or leaves it unchanged, the sequence must stabilize after finitely many steps. Let $\mathcal P^{(\infty)}$ denote the stable partition, reached at some step $R<\infty$.
If $i,j\in N$ are in different blocks of $\mathcal P^{(\infty)}$, they cannot be in the same block of $\mathcal P^{(R)}=\mathcal P^{(\infty)}$. Let $r^*$ be the smallest integer such that $i$ and $j$ are in different blocks of $\mathcal P^{(r^*)}$. Then $1\le r^*\le R<\infty$.
\end{proof}

 Using the cluster representation $\theta_\beta$ for the common phase of all nodes in cluster $S_\beta$, we can write for all $i,j \in S_a$: 
   \begin{align*}
        \dot{\theta}_i & = \omega_0 + \sigma_2\sum\limits_{\beta=1}^h k^{(2)}_{i\,\beta} \sin\big({\theta_{\beta} - \theta_{a} -\alpha \big)} + \sigma_3\sum\limits_{\gamma = 1} ^h\sum\limits_{\delta = \gamma }^h k^{(3)}_{i\,\gamma \,\delta} \sin\big({\theta_{\gamma} + \theta_{\delta}- 2\theta_{a} -\alpha \big)}\\
        \dot{\theta}_j & = \omega_0 + \sigma_2\sum\limits_{\beta=1}^h k^{(2)}_{j\,\beta} \sin\big({\theta_{\beta} - \theta_{a} -\alpha \big)} + \sigma_3\sum\limits_{\gamma = 1} ^h\sum\limits_{\delta = \gamma }^h k^{(3)}_{j\,\gamma \,\delta} \sin\big({\theta_{\gamma} + \theta_{\delta}- 2\theta_{a} -\alpha \big)}\
    \end{align*}
    where the double sum with $\delta \ge \gamma$ avoids double-counting three-body interactions, after fixing an order for the clusters $\{S_1, \dots, S_h\}$. Their difference $\theta_{ij} \defeq \theta_i - \theta_j$ vanishes for all $t$. For ease of notation, we omit the dependence of $\theta$ on $t$.    
   \begin{align}\label{eq: diff_phase}
        \dot{\theta}_{ij} = \sigma_2\sum\limits_{\beta=1}^h (k^{(2)}_{i\,\beta}-k^{(2)}_{j\,\beta}) \sin\big({\theta_{\beta} - \theta_{a} -\alpha \big)} + \sigma_3\sum\limits_{\gamma = 1} ^h\sum\limits_{\delta = \gamma }^h (k^{(3)}_{i\,\gamma \,\delta} -k^{(3)}_{j\,\gamma \,\delta})\sin\big({\theta_{\gamma} + \theta_{\delta}- 2\theta_{a} -\alpha \big)}
    \end{align}

\begin{lem}\label{lem: der_ref_diag}
Let $H=(N,E)$ be a hypergraph with $\mathrm{rk}(H)\le 3$ and let $\{\mathcal P^{(r)}\}_{r\ge0}$ be the refinement partitions produced by \cref{algo: fibers_assign} on the incidence bipartite graph $B_H$. Let $i,j\in N$ and let $r^*\ge1$ be the smallest index such that $i$ and $j$ belong to different blocks of $\mathcal P^{(r^*)}$.

Consider the dynamics of \cref{eq: dynamics_frustration_fibers} with identical initial conditions $\Theta(0)=(\theta_0,\ldots,\theta_0)$ and parameters $p=(\alpha,\sigma_2,\sigma_3)$ with $\alpha\in(0,\pi/2)$ and $\sigma_2,\sigma_3\neq 0$. Define $\theta_{ij}(t):=\theta_i(t)-\theta_j(t)$.

Then there exists a nonzero polynomial $P_{ij}^{(r^*)}\in\mathbb Z[\sigma_2,\sigma_3]$ whose coefficients depend only on incidence data depending on the refinement up to depth $r^*$ such that
\begin{equation}\label{eq: diag_poly}
\theta_{ij}^{(r^*)}(0)
= (\sin\alpha)^{a}\,(\cos\alpha)^{b}\,
P_{ij}^{(r^*)}(\sigma_2,\sigma_3),
\end{equation}
for some integers $a\ge1$ and $b\ge0$. In particular, $\theta_{ij}^{(r^*)}(0)\neq0$ for all parameters $(\alpha,\sigma_2,\sigma_3)$ outside the algebraic set $\{P_{ij}^{(r^*)}(\sigma_2,\sigma_3)=0\}$.
\end{lem}

\begin{proof}
We prove by induction on $r\ge1$ that the following statement holds. We want to prove that for every $r\ge1$, there exist integers $a_r\ge1$, $b_r\ge0$ and a polynomial $P_{ij}^{(r)}\in\mathbb Z[\sigma_2,\sigma_3]$ such that
\begin{equation*}
\theta_{ij}^{(r)}(0)
= (\sin\alpha)^{a_r}(\cos\alpha)^{b_r}
P_{ij}^{(r)}(\sigma_2,\sigma_3),
\end{equation*}
where the coefficients of $P_{ij}^{(r)}$ are integer combinations of incidence counts determined by the refinement partition $\mathcal P^{(r)}$.

\noindent\textbf{Base case $r=1$.}
From~\eqref{eq: diff_phase} we have
\begin{equation*}
\dot{\theta}_{ij}
= \sigma_2\!\sum_{\beta}
(k^{(2)}_{i\beta}-k^{(2)}_{j\beta})
\sin(\theta_\beta-\theta_a-\alpha)
+ \sigma_3\!\sum_{\gamma\le\delta}
(k^{(3)}_{i\gamma\delta}-k^{(3)}_{j\gamma\delta})
\sin(\theta_\gamma+\theta_\delta-2\theta_a-\alpha).
\end{equation*}
Under identical initial conditions, all phase differences vanish at $t=0$, so each sine term evaluates to $\sin(-\alpha)=-\sin\alpha$. Therefore
\begin{equation*}
\theta_{ij}^{(1)}(0)
= -\sin\alpha\Bigg[
\sigma_2\sum_{\beta}(k^{(2)}_{i\beta}-k^{(2)}_{j\beta})
+\sigma_3\sum_{\gamma\le\delta}
(k^{(3)}_{i\gamma\delta}-k^{(3)}_{j\gamma\delta})
\Bigg].
\end{equation*}
This is of the required form with $a_1=1$, $b_1=0$, and coefficients depending only on first–step incidence counts, i.e.\ on $\mathcal P^{(1)}$, which splits nodes according to their degrees and hyperedges according to their cardinality.

\noindent\textbf{Induction step.}
Assume the claim holds for some $r\ge1$. By definition,
\begin{equation*}
\theta_{ij}^{(r+1)}(t)
= \frac{d^r}{dt^r}\dot{\theta}_{ij}(t).
\end{equation*}
Each term of $\dot{\theta}_{ij}$ is a linear combination of expressions of the form $\sigma_m\sin(\Phi(t)-\alpha)$, where $\Phi$ is an integer linear combination of cluster phases. Repeated differentiation produces finite sums of products of: (i) trigonometric functions $\sin(\Phi(t)-\alpha)$ or $\cos(\Phi(t)-\alpha)$, and (ii) time derivatives of $\Phi$ of order at most $r$.

Evaluating at $t=0$, because of identical initial conditions, all trigonometric factors reduce to $\pm\sin\alpha$ or $\pm\cos\alpha$, while by the induction hypothesis, each phase derivative
$\Phi^{(s)}(0)$, $1\le s\le r$, is of the form
\begin{equation*}
(\sin\alpha)^{a_s}(\cos\alpha)^{b_s}
(\text{polynomial in }\sigma_2,\sigma_3),
\end{equation*}
with coefficients determined by incidence data up to depth $s$. Multiplying and summing such terms preserves the polynomial structure in $(\sigma_2,\sigma_3)$ and yields an overall factor $(\sin\alpha)^{a_{r+1}}(\cos\alpha)^{b_{r+1}}$ with $a_{r+1}\ge1$. Moreover, the coefficients depend only on incidence counts into blocks of $\mathcal P^{(r)}$, which are exactly the data used to construct $\mathcal P^{(r+1)}$. Thus the claim holds for $r+1$.

By induction, the claim holds for all $r\ge1$. Now let $r^*$ be the smallest refinement depth separating $i$ and $j$. By definition of color refinement, there exists at least one block $C\in\mathcal P^{(r^*-1)}$ for which the incidence multiplicities of $i$ and $j$ into $C$ differ. Such a discrepancy appears in the coefficient structure of $P_{ij}^{(r^*)}$ and does not cancel, implying that $P_{ij}^{(r^*)}\not\equiv0$.

Since $\sin\alpha,\cos\alpha\neq0$ for $\alpha\in(0,\pi/2)$, the \cref{eq: diag_poly} vanishes only when
$P_{ij}^{(r^*)}(\sigma_2,\sigma_3)=0$. Since a nonzero polynomial vanishes only on an algebraic set of Lebesgue measure zero in $\mathbb R^2$, we obtain $\theta_{ij}^{(r^*)}(0)\neq0$ for almost every choice of $(\sigma_2,\sigma_3)$, and hence for almost every choice of parameters $p$ as stated.
\end{proof}

\begin{thm}[Kuramoto synchrony partition $\Rightarrow$ Fibration parition]\label{thm: synch_fibres_identicalIC}
Consider a connected hypergraph $H=(N,E)$ with $\mathrm{rk}(H)\le 3$ and the dynamics of \cref{eq: dynamics_frustration_fibers} with identical natural frequencies $\omega_i=\omega_0$ and identical initial conditions $\theta_i(0)=\theta_0$ for all $i\in N$. Then, for almost every choice of parameters $p=(\alpha,\sigma_2,\sigma_3)$ with $\alpha\in(0,\pi/2)$ and $\sigma_2,\sigma_3\neq 0$, the Kuramoto synchrony partition coincides with the fibration partition of $H$.
\end{thm}
\begin{proof}
Fix parameters $p$ with $\alpha\in(0,\pi/2)$ and $\sigma_2,\sigma_3\neq 0$. Let $S_1\sqcup\cdots\sqcup S_h$ be the Kuramoto synchrony partition of the solution with $\Theta(0)=(\theta_0,\ldots,\theta_0)$.
Take two synchronous nodes $i,j\in S_a$. Then $\theta_i(t)=\theta_j(t)$ for all $t\ge0$, so by \cref{lem: sync_all_derivs} we have $\theta_{ij}^{(r)}(0)=0$ for all $r\ge1$.
Assume for contradiction that $i$ and $j$ lie in different fibres. By \cref{lem: fibre_finite_refinement} there exists a smallest $r^*$ such that $i$ and $j$ are separated at refinement step $r^*$. By \cref{lem: der_ref_diag}, for almost every choice of parameters we have $\theta_{ij}^{(r^*)}(0)\neq 0$, contradicting $\theta_{ij}^{(r^*)}(0)=0$. Therefore, for almost every choice of parameters, synchronized nodes must belong to the same fibre.
\end{proof}

\begin{figure}[!b]
\begin{center}
\begin{minipage}{0.11\textwidth}
    \centering
    
    \vspace{0.3cm}
    \scalebox{0.51}{
\begin{tikzpicture}[scale=0.8]
                \tikzstyle{point}=[circle,draw=black,fill=black,inner sep=0pt,minimum width=4pt,minimum height=4pt]        
                \node (v1) at (-0.7,0.2) {};
                \node (v2) at (0.7,0.2) {};
                \node (v3) at (0,-1) {};
                \node (v4) at (0,-4) {};
                \node (v5) at (-0.7,-5.2) {};
                \node (v6) at (0.7,-5.2) {};
                \node (v7) at (0,-2.5) {};
                \node (v8) at (1.4,-2.5) {};
                \node (v9) at (2.6,-1.8) {};
                \node (v0) at (2.6,-3.2) {};

                \begin{scope}[fill opacity=0.45]

                    \filldraw[black, fill=edgecolor, rounded corners, line width=0.1mm] ([shift={ (-0.35,0.25)}] v1.west) -- ([shift={ (0.35,0.25)}] v2.east) -- ([shift={ (0,-0.35)}] v3.south) -- cycle;

                    \filldraw[black, fill=edgecolor, rounded corners,line width=0.1mm] ([shift={ (-0.35,0)}] v8.west) -- ([shift={ (0.15,0.44)}] v9.east) -- ([shift={ (0.15,-0.44)}] v0.east) -- cycle;
                
                \end{scope}
                    
                \begin{scope}[fill opacity=0.7]

                    \node[circle, fill=c1, draw = black, scale=0.7, label={[label distance=5pt] above left:1}] (v1) at (-0.7,0.2) {};
                    \node[circle, fill=c1, draw = black, scale=0.7, label={[label distance=5pt]above right:2}] (v2) at (0.7,0.2) {};
                    \node[regular polygon, regular polygon sides=3, rotate=180, fill=c6, draw = black, scale=0.6, label={[label distance=5pt]above left:0}] (v3) at (0,-1) {};
                    \node[star, fill=c3, draw = black, scale=0.6, label={[label distance=4pt]left:3}] (v4) at (0,-4) {};
                    \node[regular polygon, regular polygon sides=3, fill=c5, draw = black, scale=0.6pt, label={[label distance=4pt]below left:4}] (v5) at (-0.7,-5.2) {};
                    \node[regular polygon, regular polygon sides=3, fill=c5, draw = black, scale=0.6pt, label={[label distance=4pt]below right:5}] (v6) at (0.7,-5.2) {};
                    \node[rectangle, fill=c4, draw = black, scale=0.8, label={[label distance=4pt]left:6}] (v7) at (0,-2.5) {};
                    \node[regular polygon, regular polygon sides=5, fill=c2, draw = black, scale=0.6pt, label={[label distance=4pt]below left:7}] (v8) at (1.4,-2.5) {};
                    \node[diamond, fill=c7, draw = black, scale=0.6, label={[label distance=5pt]above right:8}] (v9) at (2.6,-1.8) {};
                    \node[diamond, fill=c7, draw = black, scale=0.6, label={[label distance=5pt]below right:9}] (v0) at (2.6,-3.2) {};

                \end{scope}

                \draw[black, line width=0.1mm, thin] (v3) -- (v7);
                \draw[black, line width=0.1mm, thin] (v4) -- (v5);                      
                \draw[black, line width=0.1mm, thin] (v4) -- (v6);               
                \draw[black, line width=0.1mm, thin] (v5) -- (v6);              
                \draw[black, line width=0.1mm, thin] (v7) -- (v4);              
                \draw[black, line width=0.1mm, thin] (v9) -- (v0);            
                \draw[black, line width=0.1mm, thin] (v7) -- (v8);


    \end{tikzpicture}
    
    \vspace{2.57cm} 
    
    \scalebox{0.51}{\begin{tikzpicture}[scale=0.8]
                 \tikzstyle{point}=[circle,draw=black,fill=black,inner sep=0pt,minimum width=4pt,minimum height=4pt]   

                 \begin{scope}[fill opacity=0.7]

                    \node[circle, fill=c1, draw = black, scale=0.7, label={[label distance=5pt] above left:1}] (v1) at (-0.7,0.2) {};
                    \node[circle, fill=c1, draw = black, scale=0.7, label={[label distance=5pt]above right:2}] (v2) at (0.7,0.2) {};
                    \node[regular polygon, regular polygon sides=3, rotate=180, fill=c6, draw = black, scale=0.6, label={[label distance=5pt]above left:0}] (v3) at (0,-1) {};
                    \node[regular polygon, regular polygon sides=3, rotate=180, fill=c6, draw = black, scale=0.6, label={[label distance=4pt]right:3}] (v4) at (0,-4) {};
                    \node[circle, fill=c1, draw = black, scale=0.6, label={[label distance=4pt]below left:4}] (v5) at (-0.7,-5.2) {};
                    \node[circle, fill=c1, draw = black, scale=0.6, label={[label distance=4pt]below right:5}] (v6) at (0.7,-5.2) {};
                    \node[rectangle, fill=c4, draw = black, scale=0.8, label={[label distance=4pt]left:6}] (v7) at (0,-2.5) {};
                    \node[regular polygon, regular polygon sides=3, rotate=180, fill=c6, draw = black, scale=0.6, label={[label distance=4pt]above right:7}] (v8) at (1.4,-2.5) {};
                    \node[circle, fill=c1, draw = black, scale=0.6pt, label={[label distance=5pt]above right:8}] (v9) at (2.6,-1.8) {};
                    \node[circle, fill=c1, draw = black, scale=0.6pt, label={[label distance=5pt]below right:9}] (v0) at (2.6,-3.2) {};

                \end{scope}

                \draw[black, line width=0.1mm, thin] (v3) -- (v7);
                \draw[black, line width=0.1mm, thin] (v4) -- (v5);                      
                \draw[black, line width=0.1mm, thin] (v4) -- (v6);               
                \draw[black, line width=0.1mm, thin] (v5) -- (v6);              
                \draw[black, line width=0.1mm, thin] (v7) -- (v4);              
                \draw[black, line width=0.1mm, thin] (v9) -- (v0);            
                \draw[black, line width=0.1mm, thin] (v7) -- (v8);            
                \draw[black, line width=0.1mm, thin] (v1) -- (v2);              
                \draw[black, line width=0.1mm, thin] (v2) -- (v3);            
                \draw[black, line width=0.1mm, thin] (v1) -- (v3);            
                \draw[black, line width=0.1mm, thin] (v8) -- (v9);            
                \draw[black, line width=0.1mm, thin] (v8) -- (v0);

\end{tikzpicture}}   
    
    \vspace{2.57cm}
    
    \scalebox{0.51}{\begin{tikzpicture}[scale=0.8]
                \tikzstyle{point}=[circle,draw=black,fill=black,inner sep=0pt,minimum width=4pt,minimum height=4pt]   

                \begin{scope}[fill opacity=0.7]

                    \node[circle, fill=c1, draw = black, scale=0.7, label={[label distance=5pt] above left:1}] (v1) at (-0.7,0.2) {};
                    \node[circle, fill=c1, draw = black, scale=0.7, label={[label distance=5pt]above right:2}] (v2) at (0.7,0.2) {};
                    \node[regular polygon, regular polygon sides=3, rotate=180, fill=c6, draw = black, scale=0.6, label={[label distance=5pt]above left:0}] (v3) at (0,-1) {};
                    \node[regular polygon, regular polygon sides=3, rotate=180, fill=c6, draw = black, scale=0.6, label={[label distance=4pt]right:3}] (v4) at (0,-4) {};
                    \node[circle, fill=c1, draw = black, scale=0.6, label={[label distance=4pt]below left:4}] (v5) at (-0.7,-5.2) {};
                    \node[circle, fill=c1, draw = black, scale=0.6, label={[label distance=4pt]below right:5}] (v6) at (0.7,-5.2) {};
                    \node[rectangle, fill=c4, draw = black, scale=0.8, label={[label distance=4pt]left:6}] (v7) at (0,-2.5) {};
                    \node[regular polygon, regular polygon sides=5, fill=c2, draw = black, scale=0.6, label={[label distance=4pt]below left:7}] (v8) at (1.4,-2.5) {};
                    \node[diamond, fill=c7, draw = black, scale=0.6pt, label={[label distance=5pt]above right:8}] (v9) at (2.6,-1.8) {};
                    \node[diamond, fill=c7, draw = black, scale=0.6pt, label={[label distance=5pt]below right:9}] (v0) at (2.6,-3.2) {};

                \end{scope}

                \draw[black, line width=0.1mm, thin] (v3) -- (v7);
                \draw[black, line width=0.1mm, thin] (v4) -- (v5);                      
                \draw[black, line width=0.1mm, thin] (v4) -- (v6);               
                \draw[black, line width=0.1mm, thin] (v5) -- (v6);              
                \draw[black, line width=0.1mm, thin] (v7) -- (v4);             
                \draw[black, line width=0.1mm, thin] (v7) -- (v8);            
                \draw[black, line width=0.1mm, thin] (v1) -- (v2);              
                \draw[black, line width=0.1mm, thin] (v2) -- (v3);            
                \draw[black, line width=0.1mm, thin] (v1) -- (v3);            
                \draw[black, line width=0.1mm, thin] (v8) -- (v9);            
                \draw[black, line width=0.1mm, thin] (v8) -- (v0);

                \path[color = black, bend left=15, line width=0.1mm, thin] (v9) edge node[above] {} (v0);
                \path[color = black, bend left=15, line width=0.1mm, thin] (v0) edge node[above] {} (v9);

\end{tikzpicture}    
              
\end{minipage}
\begin{minipage}{0.8\textwidth}
    \includegraphics[width=\textwidth]{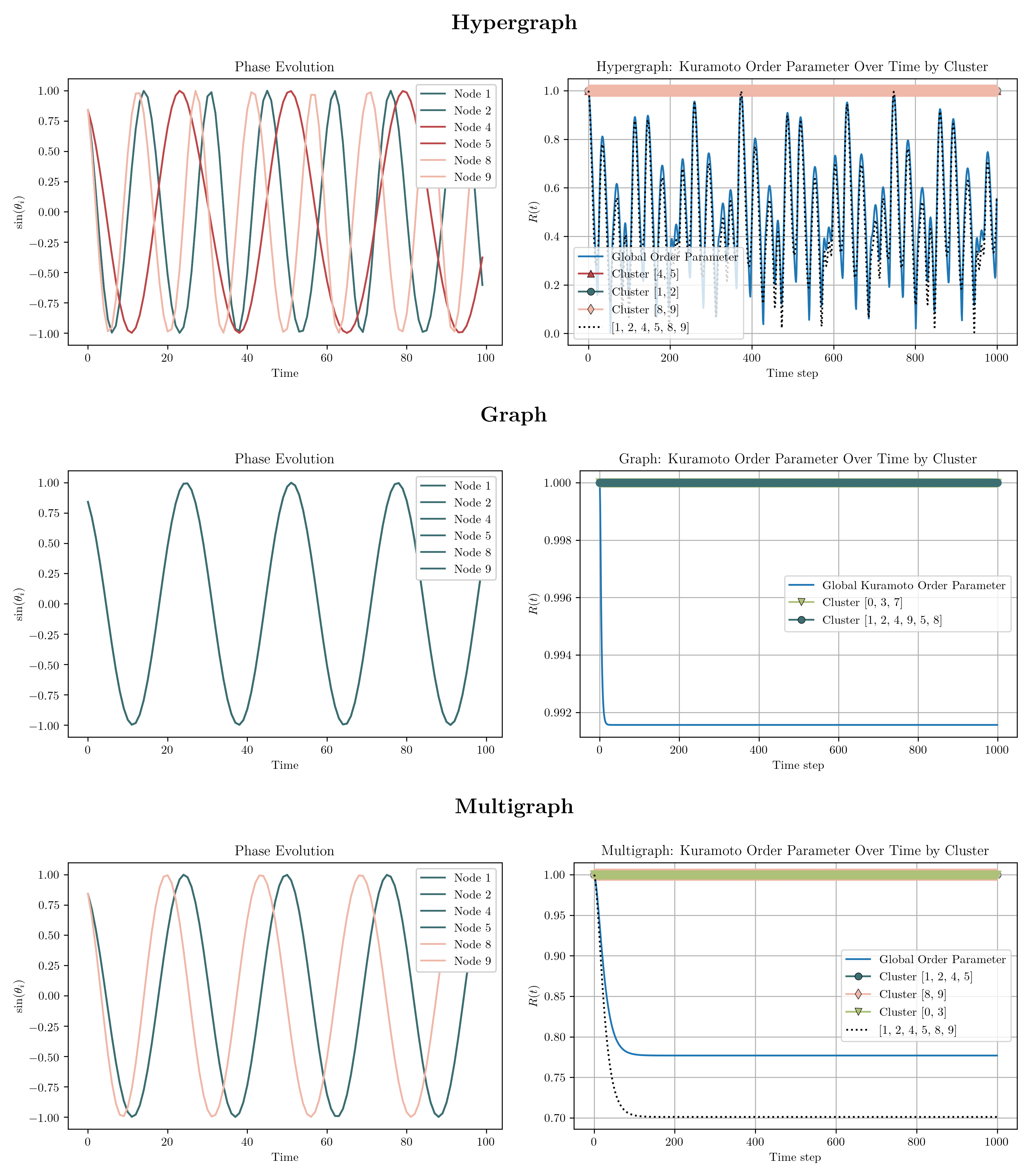}
\end{minipage}
\end{center}
    \caption{\textit{Comparison of the dynamics of the higher-order Kuramoto model with frustration.} The comparison is related to the example shown in \cref{fig: fibers_differences} across three different representations reported on the left of the plots: the hypergraph (top row), its projection in a simple graph (middle row), and its projection in a graph with multiple edges (bottom row). The left plot shows the sine of the phase evolution over the first 100 time steps for the selected nodes $[1,2,4,5,8,9]$ that change fibre depending on the representation, colored according to the fibre they belong to. The right plot reports the local Kuramoto order parameter $R(t)$, measuring phase coherence within each fibre. Colored lines represent intra-fibre synchronization, and the blue thin line shows the global order parameter. For the hypergraph and the multi-order graph (lines 1 and 3), we also report with the dotted black line the measure of coherence computed for the subset of nodes $[1,2,4,5,8,9]$ (which is a fibre only for the simple graph) to illustrate different synchronization patterns across different representations. On the right, the intra-fibre local order parameters are completely overlapped as they are identically 1. This highlights how synchronization patterns can vary significantly depending on the chosen representation. The parameters for the dynamic \cref{eq: dynamics_frustration_fibers} have initial condition and natural frequencies $(\theta_i, \omega_i) = (1,0)$ for all nodes $i$, frustration parameter $\alpha = \pi/6$, and the strength parameters are $(\sigma_2, \sigma_3) = (0.2,0.8)$
    }\label{fig: comparison}
\end{figure}
\begin{oss}
The emergence of synchronization is governed not only by the presence of connections but also by the organization of interactions into fibres, subsets of nodes that share equivalent dynamical roles. These fibres are not intrinsic to the node set alone; they depend on how the system is represented. A hypergraph, its projection to a simple graph, or a multiedge graph can each induce different fibre structures as exemplified in \cref{fig: fibers_differences}. The two previous theorems show that it has a direct impact on which groups of nodes synchronize, starting from the same initial conditions. Only clusters that align with the true fibre structure of the interaction model can be expected to maintain internal coherence. If an incorrect or oversimplified representation is used, synchronization is lost.
This phenomenon is illustrated in \cref{fig: comparison}, where the same dynamical system is represented as a hypergraph (top row), a simple graph (middle), and a multiedge graph (bottom), with reference to \cref{fig: fibers_differences}. Although the parameters and initial conditions are identical, different sets of nodes synchronize in each case, corresponding to the different fibres. 
The set of represented nodes $[1,2,4,5,8,9]$ is split into three different fibres for the hypergraph, a unique fibre for the graph, and two fibres for the multigraph. This structural condition coincides with the dynamical condition of cluster synchronization.
\end{oss}

\subsubsection*{Algorithm for synchronized nodes}
Through the previous theoretical analysis, we established a correspondence between hypergraph fibres and synchronizing nodes within a higher-order Kuramoto model incorporating frustration parameters. 
To empirically verify this relationship, we developed \cref{algo: kuramoto_clusters} that groups nodes based on phase coherence measures. In \cref{obs: degree_uniformity}, we note that nodes with identical initial conditions and natural frequencies will synchronize if they share the same degree at every order, so initially we categorize them as such.
The algorithm then involves temporal sampling by comparing phases pairwise among node combinations within the same split (with the same degree for all orders). For node pairs with aligned phases at all sampled times, we compute the average local order parameter over time to identify nearly unitary values as synchronized clusters.
These pairwise clusters were subsequently merged when sharing common elements, resulting in a partitioning of the nodes. 
To efficiently merge these pairwise synchronized clusters when they share common nodes, we construct an undirected graph where each node represents a Kuramoto oscillator, and edges connect those found to be pairwise synchronized. The connected components of this support graph correspond to the final synchronized clusters.
The experimental results, validated across both real-world datasets and synthetic hypergraphs, reproduced the theoretical predictions of the fibres.
Kuramoto-based clustering methodologies have been extensively investigated in the literature, with notable contributions (e.g.~\cite{bohm_clustering_2010, shao_robust_2013}). Specifically, we can state that when employing a Kuramoto model with identical initial conditions and natural frequencies across all nodes, and non-zero frustration parameters, global synchronization is inhibited. In contrast, local synchronization emerges among nodes that share equivalent inflow information structures.\\

\begin{algorithm}[H]

\SetAlgoLined
\KwIn{Phase history matrix $\Theta \in \mathbb{R}^{T \times \lvert N \rvert }$, node groups $\{N_d\}_d$ by sequence of degree for all hyperedges orders, tolerance $\epsilon > 0$, number of time samples $n$.}
\KwOut{Set of synchronized clusters $C = \{C_1, \dots, C_m\}$.}

\textbf{Step 1: Sample time points.} \\
Randomly select $n$ time indices $\{t_1, \dots, t_n\}$ from $\{1, \dots, T\}$.

\textbf{Step 2: Iterate over degree-based node groups.} \\
\ForEach{group of nodes $V_d = N_d$}{
    Initialize a graph $G_d$ with node set $V_d$.

    \ForEach{pair $(i, j) \in V_d\times V_d$}{
        Extract trajectories $\theta_i(t)$ and $\theta_j(t)$ from $\Theta$.

        \If{$|\theta_i(t_k) - \theta_j(t_k)| < \epsilon$ for all $k = 1, \dots, n$}{
            Compute the average order parameter $\langle R_{ij}\rangle$ over all time.

            \If{$\langle R_{ij}\rangle < \epsilon$}{
                Add edge $(i, j)$ to $G_d$.
            }
        }
    }

    \textbf{Step 3: Extract synchronized clusters.} \\
    Compute connected components $\{C_1^{(d)}, \dots, C_{m_d}^{(d)}\}$ of $G_d$.

    \ForEach{$C_\ell^{(d)}$}{
        Add $C_\ell^{(d)}$ to $C$.
    }
}

\Return{$C$ (maximal synchronized clusters).}
\caption{Extracting synchronized clusters from HO Kuramoto with frustration dynamics.}\label{algo: kuramoto_clusters}
\end{algorithm}

\section{Modifying topology to recover or enforce symmetries}
The role of symmetry in network science has evolved from a purely descriptive framework, centered on automorphisms and equitable partitions, to a tool for designing and repairing networked systems to achieve desired dynamical behavior. Recent advances have emphasized two intertwined goals: the recovery of symmetries in networks distorted by noise, incomplete data, or broken connectivity, and the engineering of symmetries through minimal topological interventions to guide or enhance processes such as synchronization or control. These efforts reveal symmetry not merely as an inherent property, but as a target of intervention, bridging structural analysis and dynamical optimization.

This section provides an overview of recent contributions in this paradigm. Each approach addresses the challenge of how to modify a network to reflect or enforce a desired symmetry, whether derived from observed dynamics, known structural constraints, or theoretical design goals.
Introducing a rigorous framework to approach the problem, Boldi et al.~\cite{boldi_quasifibrations_2022} define the notion of quasifibrations as a way to recover approximate symmetries in biological networks by identifying minimal modifications to the topology that restore an underlying fibration-like structure. Similarly, Leifer et al.~\cite{leifer_symmetrydriven_2022} develop a symmetry-driven network reconstruction method that modifies edges to satisfy local balance constraints based on pseudocoloring, offering an approach to repair corrupted networks and obtain the optimized number of clusters. In a different direction, Gambuzza et al.~\cite{gambuzza_controlling_2021} focus on enforcing desired symmetries to achieve clustered dynamics, proposing targeted topological interventions based on external equitable partitions that guarantee stable cluster synchronization. Other works adopt generative or learning-based approaches: Klickstein and Sorrentino~\cite{klickstein_generating_2018} construct synthetic networks with predefined automorphism groups to serve as symmetry benchmarks, while Zou et al.~\cite{zou_rewiring_2024} use reinforcement learning with graph neural networks to rewire network links and achieve symmetry patterns that support cluster synchronization. Closing the loop between theory and application, Zou et al.~\cite{zou_control_2025} show how structural symmetry can be leveraged in physical systems to control chaotic synchronization, proving that altering a network to match dynamical symmetry is both theoretically justified and experimentally feasible. Together, these contributions (see the comparative \cref{tab: symmetry_methods_comparison}) highlight a growing perspective where symmetries are not only analysed but also shaped or repaired, providing a natural foundation for extending these ideas to hypergraphs, where higher-order symmetries can be studied through generalized fibrations. The field of hypergraph modifications for symmetry analysis raises many questions and provides directions for future research. 

\begin{table}[htbp]
    \centering    
    \begin{booktabs}{
        colspec = {X[0.9,l]X[0.9,l]X[0.9,l]X[0.9,l]},
        rowsep = 3pt,        
        row{1} = {font=\bfseries}
    }    
        \toprule
        Work & Target problem & Mathematical object & Main technique \\
        \midrule
        Boldi et al.\ (2022)~\cite{boldi_quasifibrations_2022} & Recover latent symmetries from incomplete graphs & Quasifibration & Greedy and local isomorphism checks \\
        
        Gambuzza et al.\ (2021)~\cite{gambuzza_controlling_2021} & Impose a target cluster pattern & Balanced Laplacian perturbation & Mixed-integer and convex optimisation \\
        
        Klickstein et al.\ (2018)~\cite{klickstein_generating_2018} & Produce test graphs with chosen symmetry & Automorphisms and equitable partitions & Constructive group-theoretic generator \\
        
        Leifer et al.\ (2022)~\cite{leifer_symmetrydriven_2022} & Repair noisy networks via pseudosymmetries & Pseudobalanced colouring & Integer programming \\
        
        Zou et al.\ (2024)~\cite{zou_rewiring_2024}& Learn rewiring policy for achievable cluster sync & Graph Convolution Network & Reinforcement learning \\
        \bottomrule
    \end{booktabs}
    
    \caption{
    \label{tab: symmetry_methods_comparison}\textit{Comparison of some recent approaches for recovering or enforcing network symmetries.}}
\end{table}

In what follows, we make an initial contribution by tackling challenges arising from realistic data scenarios and computational constraints: reconstructing latent symmetries from partial observations and eliminating structural redundancy, maintaining equivalence relationships for information flows. \cref{fig: modifications_examples} recaps with three simple examples the problems.

\begin{figure}[ht]
    \centering
    \scalebox{0.8}{\begin{tikzpicture}[scale=1]

\coordinate (O1) at (0,0);
\coordinate (O2) at (6,0);
\coordinate (O3) at (12,0);

\node[align=center] at ($(O1)+(0,3.3)$) {\textbf{1. Sparsification}\\\textit{Fibre-preserving reduction}};

\node[star, fill=hp4!60, draw=black, minimum size=6pt, scale=0.7, label={[label distance=7pt]left:A}] (a1) at ($(O1)+(-0.7,0.2)$) {};
\node[star, fill=hp4!60, draw=black, minimum size=6pt, scale=0.7, label={[label distance=7pt]right:B}] (b1) at ($(O1)+(0.7,0.2)$) {};
\node[circle, fill=hp1!60, draw=black, minimum size=6pt, scale=0.9, label={[label distance=7pt]below:C}] (c1) at ($(O1)+(0,-1)$) {};
\node[rectangle, fill=hp3!80, draw=black, minimum size=6pt, label={[label distance=7pt]above:D}] (d1) at ($(O1)+(0,1.4)$) {};

\begin{scope}[fill opacity=0.3]
\filldraw[black, fill=edgecolor, thin, rounded corners] 
    ([shift={(-0.35,0.25)}] a1.west) --
    ([shift={(0.35,0.25)}] b1.east) --
    ([shift={(0,-0.35)}] c1.south) -- cycle;
\end{scope}

\draw[dashed, line width=0.5mm] (c1) -- (a1);
\draw[dashed, line width=0.5mm] (c1) -- (b1);
\draw (d1) -- (a1);
\draw (d1) -- (b1);

\node[align=left, font=\footnotesize] at ($(O1)+(0,-2.5)$) {
    Example\\
    Original fibres: $\{A,B\},\{C\},\{D\}$\\
    Target fibres: $\{A,B\},\{C\},\{D\}$\\
    Removed hyperedges: $(A,C), (C,B)$
};

\node[align=center] at ($(O2)+(0,3.3)$) {\textbf{2. Redundancy Injection}\\\textit{Non-invasive reinforcement}};

\node[star, fill=hp4!60, draw=black, minimum size=6pt, scale=0.7, label={[label distance=7pt]left:A}] (a2) at ($(O2)+(-0.7,0.2)$) {};
\node[star, fill=hp4!60, draw=black, minimum size=6pt, scale=0.7, label={[label distance=7pt]right:B}] (b2) at ($(O2)+(0.7,0.2)$) {};
\node[circle, fill=hp1!60, draw=black, minimum size=6pt, scale=0.9, label={[label distance=7pt]below:C}] (c2) at ($(O2)+(0,-1)$) {};
\node[rectangle, fill=hp3!80, draw=black, minimum size=6pt, label={[label distance=7pt]above:D}] (d2) at ($(O2)+(0,1.4)$) {};

\begin{scope}[fill opacity=0.3]
\filldraw[black, fill=edgecolor, thin, rounded corners] 
    ([shift={(-0.35,0.25)}] a2.west) --
    ([shift={(0.35,0.25)}] b2.east) --
    ([shift={(0,-0.35)}] c2.south) -- cycle;
\end{scope}

\draw[black, line width=0.5mm] (c2) -- (a2);
\draw[black, line width=0.5mm] (c2) -- (b2);
\draw[black, line width=0.5mm] (a2) -- (b2);
\draw (d2) -- (a2);
\draw (d2) -- (b2);

\node[align=left, font=\footnotesize] at ($(O2)+(0,-2.5)$) {
    Example\\
    Original fibres: $\{A,B\},\{C\},\{D\}$\\
    Target fibres: $\{A,B\},\{C\},\{D\}$\\
    Added hyperedges: $(A,C), (C,B), (A,B)$
};

\node[align=center] at ($(O3)+(0,3.3)$) {\textbf{3. Topology Correction}\\\textit{Fibration alignment}};

\node[star, fill=hp4!60, draw=black, minimum size=6pt, scale=0.7, label={[label distance=7pt]left:A}] (a3) at ($(O3)+(-0.7,0.2)$) {};
\node[star, fill=hp4!60, draw=black, minimum size=6pt, scale=1, label={[label distance=7pt]right:B}] (b3) at ($(O3)+(0.7,0.2)$) {};
\node[circle, fill=hp1!60, draw=black, minimum size=6pt, scale=0.9, label={[label distance=7pt]below:C}] (c3) at ($(O3)+(0,-1)$) {};
\node[rectangle, fill=hp3!80, draw=black, minimum size=6pt, label={[label distance=7pt]above:D}] (d3) at ($(O3)+(0,1.4)$) {};

\begin{scope}[fill opacity=0.3]
\filldraw[black, fill=edgecolor, line width=0.2mm, rounded corners] 
    ([shift={(-0.35,0.25)}] a3.west) --
    ([shift={(0.35,0.25)}] b3.east) --
    ([shift={(0,-0.35)}] c3.south) -- cycle;
\end{scope}

\draw[line width=0.2mm] (c3) -- (a3);
\draw[line width=0.2mm] (a3) -- (b3);
\draw[line width=0.2mm] (d3) -- (a3);
\draw[line width=0.2mm] (d3) -- (b3);
\draw[line width=0.5mm] (c3) -- (b3);

\node[diamond, fill=hp5!60, draw=black, minimum size=6pt, scale=0.5] at ($(O3)+(0.7,0.2)$) {};

\node[align=left, font=\footnotesize] at ($(O3)+(0,-2.5)$) {
    Example\\
    Original fibres: $\{A\},\{B\},\{C\},\{D\}$\\
    Target fibres: $\{A,B\},\{C\},\{D\}$\\
    Added hyperedges: $(C,B)$
};

\end{tikzpicture}} 
    \caption{\textit{Visualization of considered hypergraph modification cases.} Examples of hypergraph modifications aimed at: (1) reducing or (2) increasing the redundancy of informative hyperedges while preserving the original fibre partition of the nodes. Case (3) illustrates the addition of hyperedges to achieve a target fibre partition. Dashed hyperedges represent those removed by the algorithm, while thick hyperedges indicate additions made to enforce a specific input configuration on the nodes.}\label{fig: modifications_examples}
\end{figure}

\subsection{Sparsification of a hypergraph preserving fibres}

We address a problem related to higher-order interactions in network modification for the recovery or enforcement of symmetry. As already observed, hypergraphs represent systems with multi-node interactions, such as social groups, biochemical reactions, or communication networks. These often contain redundant hyperedges that contribute little to distinguish nodes having different information flows. Removing such redundancies simplifies the hypergraph, improves interpretability, and speeds up analysis and simulation. We seek a process that preserves connectivity, avoids adding new connections, and maintains fibre structures. Starting from a hypergraph, we aim to find a connected hypergraph having the same nodes as the original and with the minimum number of hyperedges needed to preserve the partition of the nodes induced by the fibration symmetry.

\paragraph*{Algorithmic approach}
When partitioning the nodes in fibres with \cref{algo: fibers_assign}, a color is also assigned to each hyperedge, based on the fibres of the nodes belonging to it. Then, \cref{algo: sparse} exploits this coloring scheme and applies a greedy, iterative process to remove these hyperedge classes one by one, accepting removals only when they preserve the fibre partition and connectivity. To explore the solution space, it tests multiple random permutations of the order in which hyperedge classes are removed, retaining the sparsification that yields the smallest hyperedge set while meeting constraints. Additionally, in a variant of the algorithm, it is possible to specify a set of hyperedges that must not be removed, either because their presence is experimentally validated or because they are known to be structurally critical. In this case, the sparsification procedure is constrained to preserve those hyperedges.
While the greedy approach is computationally efficient for large hypergraphs, it does not guarantee a globally optimal solution; hence, the minimal possible hyperedge set preserving fibres and connectivity may not be found. This limitation arises because decisions are made sequentially without backtracking or exhaustive search, potentially missing better combinations of removals. However, the method combines feasibility and structural consistency, providing a sparsification that respects fibration symmetries and connectivity, which is sufficient for many applications.

\begin{algorithm}[H]
\SetAlgoLined
\KwIn{Hypergraph $H = (N, E)$ and a node partition $C = \{C_i\}$ representing fibres.}
\KwOut{A reduced hypergraph $H' = (N, E')$ with $\lvert E'\rvert \leq \lvert E\rvert$ preserving both fibre structure and connectivity.}

\textbf{Step 1: Group hyperedges by structural color.} \\
Group hyperedges that share the same color obtained with \cref{algo: fibers_assign}.

\textbf{Step 2: Generate candidate color removal orders.} \\
Define several orders in which to attempt removing hyperedge color groups.

\textbf{Step 3: Iteratively attempt to remove hyperedges.} \\
\ForEach{color group in the selected order}{
    Temporarily remove all hyperedges of this color group.\\
    \If{the resulting hypergraph is connected and the fibre partition preserved}{
        Accept the removal permanently.
    }
    \Else{
        Roll back the removal.
    }
}

\textbf{Step 4: Return the sparsified hypergraph.} \\
Among all the sets of new hyperedges computed in Step 3 in correspondence of different color orders, keep the smallest $E'$.
\caption{Greedy sparsification of a hypergraph while preserving fibre partitions and connectivity.}\label{algo: sparse}
\end{algorithm}

\subsection{Modifying hypergraphs to match target fibres}
In this work, we have observed that nodes belonging to the same fibre often exhibit synchronization or tightly correlated behavior, as predicted by the underlying local symmetries of the hypergraph. However, in real-world or biological systems, the observed topology may be incomplete or corrupted due to measurement noise, data loss, or partial observation. As a result, the hypergraph topology might not reflect the true interaction structure responsible for the observed dynamical patterns. When we observe (or can deduce) that specific nodes exhibit synchronized or identical behavior, we can leverage this information to reconstruct a likely hypergraph structure. This reconstructed hypergraph would reproduce the observed pattern of synchronized node groups while maintaining maximum similarity to the original network's structure and dynamics.

\paragraph*{Algorithmic approach}
The approach we propose in \cref{algo: refine_and_sparsify} is greedy and heuristic, relying on iterative adjustments. 
Initially, we iteratively compare the actual fibres and the target fibres after adding new hyperedges. Then we use a version of the pruning \cref{algo: sparse} to reduce the number of hyperedges while keeping the original hyperedges in $E$: in this way, we avoid removing original hyperedges.
Since the process depends on incremental structural corrections, convergence is not guaranteed, especially for complex or conflicting target partitions.

\begin{algorithm}[H]
\SetAlgoLined
\KwIn{Hypergraph $H = (N, E)$ and a target fibre partition $C = \{C_i\}$.}
\KwOut{A hypergraph $H' = (N, E')$ with fibre structure $C$.}

\textbf{Step 1: Initialization.} \\
Compute P, the fibre partition of H.

\textbf{Step 2: Refine the fibre structure.} \\
\While{$P$ differs from $C$ and iteration limit not reached}{
    Compute current fibre partition $P$. \\
    \ForEach{cluster in $P \setminus C$}{
        Add minimal differences to split the cluster.
    }
    \ForEach{cluster in $C \setminus P$}{
        Add minimal common connectivity to merge the cluster.
    }
}

\textbf{Step 3: Sparsify the hypergraph.} \\
Apply \cref{algo: sparse} to remove redundant hyperedges not removing original hyperedges in $E$ while preserving $C$ and connectivity.

\textbf{Step 4: Return final result.} \\
Output the resulting hypergraph $(N, E')$.

\caption{Modification of a hypergraph to reach target fibre partition.}\label{algo: refine_and_sparsify}
\end{algorithm}

\subsection{Adding redundant hyperedges without altering fibres}\label{subsec: red}
While minimizing the number of hyperedges is often desirable, strategically adding redundancy can reinforce symmetries, preserve fibre structure, and improve the functional and dynamical properties of the hypergraph without altering the essential partition, enhancing robustness to perturbations and enabling more regular, compressible representations that facilitate analysis and model reduction.

\paragraph*{Algorithmic approach}
The method described in \cref{algo: add_structured_hyperedges} adds new hyperedges in a way that reinforces a given fibre partition $C$ while preserving it. The procedure is structural and randomized: it samples combinations of clusters and generates new hyperedges by connecting nodes from those clusters. To ensure consistency, the number of new hyperedges added at each step is the least common multiple of the sizes of the selected clusters, and nodes are paired cyclically. In this way, we do not disrupt the connection coherence intra-cluster. 
In detail, at each iteration, the algorithm selects a target hyperedge order $r$ from a pre-shuffled list of integers. This shuffling ensures a fair exploration of interaction orders, avoiding any systematic bias toward lower or higher orders.
To construct a candidate set of hyperedges, the algorithm randomly picks $r$ distinct clusters from the partition $C$, where $r$ is the chosen order. Importantly, it may not be enough to add a single hyperedge: to preserve the fibre structure, all nodes within each selected cluster must receive identical new connection patterns. This guarantees that their structural role in the hypergraph remains symmetric.
To enforce this uniformity, the algorithm computes the least common multiple $L$ of the sizes of the selected clusters. This value determines the number of hyperedges to generate. For each cluster, its nodes are sampled without replacement and then repeated cyclically until a list of length $L$ is formed. These repeated lists are then zipped together with one node from each list per position, to produce exactly $L$ hyperedges. Each of these new hyperedges connects one node from each selected cluster, ensuring that every node in those clusters appears the same number of times across the batch. The result is a structurally consistent set of interactions that respects the symmetries encoded by $C$.
Each batch of candidate hyperedges is accepted only if the resulting hypergraph preserves the original fibre partition. Indeed, even if the addition of hyperedges to maintain the same kind and number of connections intra-fibre is preserved, some clusters may collapse and unite into a single, larger fibre. This procedure allows the hypergraph to gain structural redundancy while strictly maintaining the predefined node grouping.

\begin{algorithm}[H]
\SetAlgoLined
\KwIn{Hypergraph $H = (N, E)$ with fibre partition $C = \{C_i\}$, maximum number of hyperedges to add $K$, maximum number of iterations $T$.}
\KwOut{A hypergraph $H' = (N, E')$ with $\lvert E'\rvert \geq \lvert E\rvert$ preserving both fibre structure $C$.}

\textbf{Step 1: Initialization.} \\
Let $E' \gets E$ be the set of current hyperedges. \\
Let $d = rk(H)$. \\
Initialize a counter for added hyperedges. \\
Let $T$ be the total number of trials (iteration limit). \\
Define a random sequence over orders  $2, \dots, d$.

\textbf{Step 2: Iteratively attempt to add new hyperedges.} \\
\While{number of added hyperedges $< K$ and trials left}{
    Pick order $r$ from the random sequence. \\
    Randomly select $r$ distinct clusters from $C$. \\
    \For{$i \gets 1$ to $3$ (retry limit)}{
        Compute the least common multiple $L$ of the sizes of the selected clusters. \\
        For each cluster $C_i$, sample its nodes without replacement and repeat them cyclically to reach length $L$. \\
        Construct $L$ candidate hyperedges by zipping together one node from each repeated cluster list. \\
        \If{none of the candidate hyperedges already exist in $E'$}{
            Temporarily add them to form $E''$. \\
            \If{fibre partition of $H'' = (N, E'')$ equals $C$}{
                Commit $E' \gets E''$. \\
                Update the counter of added hyperedges. \\
                \textbf{break} retry loop.
            }
        }
    }
}

\textbf{Step 3: Return final result.} \\
Output the resulting hypergraph $H' = (N, E')$.

\caption{Addition of redundant hyperedges preserving a target fibre partition.}\label{algo: add_structured_hyperedges}
\end{algorithm}

\subsection{Discussion}
In this work, we have explored the relationship between local structural symmetries in undirected hypergraphs and the emergence of synchronization patterns in dynamical systems. We have deepened the understanding of fibration symmetry for complex systems with higher-order interactions, using the frustrated Kuramoto model as an example to study the dependence of topology on the emergence of cluster synchronization patterns. However, our theoretical framework relies on a critical assumption that significantly limits its direct applicability to real-world processes: the requirement of identical initial conditions and natural frequencies across all nodes. This assumption, while mathematically necessary for our theoretical results, is unrealistic in natural systems. Real networks invariably exhibit heterogeneity in both initial states and intrinsic dynamics, making our exact synchrony conditions often unattainable. Despite these limitations, the duality between structural fibration and dynamical synchronization revealed through the Kuramoto framework provides valuable theoretical insights. It offers alternatives for algorithmic applications in clustering and community detection. The Kuramoto model's ability to naturally organize nodes according to underlying structural symmetries suggests potential when considering frustration parameters to inhibit global synchronization. For practical relevance, we suggest greedy hypergraph compression methods that keep fibration compatibility and employ topology modification strategies to achieve desired node partitions, useful for handling incomplete or noisy data.
Future work could explore generalizations of the framework introduced here, including the development of enhanced algorithms beyond the greedy approaches presented here for topology modification problems. Such extensions help capture more nuanced forms of partial synchronization in systems where the exact fibre structure is absent.

\section{Methods}
All simulations carried out throughout the paper, unless otherwise stated, solve the Cauchy problem by computing trajectories using the method (\texttt{LSODA}) from the Python library \href{https://docs.scipy.org/doc/scipy/reference/generated/scipy.integrate.solve_ivp.html}{SciPy} with step size 0.1. Synthetic hypergraphs are generated using \texttt{random hypergraph} from the \texttt{hypergraphx.generation.random} module from the Python library \href{https://hypergraphx.readthedocs.io/en/master/index.html}{Hypergraphx} by specifying the number of nodes and hyperedges of orders 2 and 3, then extracting the largest connected component.

\section{Data availability}
The datasets used in \cref{sub: real} are from \href{https://github.com/xgi-org/xgi-data}{XGI dataset}~\cite{landry_xgi_2023} and the MAG-10 dataset is from the preprocessed files in \url{https://github.com/TheoryInPractice/overlapping-ecc/tree/master/data/MAG-10} ~\cite{amburgilya_clustering_2020, sinha_overview_2015}.

\section{Code availability}\label{sec: code}
The code used for the analysis in the paper is organized in \url{https://github.com/margheritaberte/Fibration_symmetries_cluster_synchronization}.
The repository provides functions to compute the fibre partition (i.e., the coarsest equitable partition or minimal balanced coloring) for nodes of graphs, multigraphs, and hypergraphs, along with example notebooks that illustrate, visualize, and apply the algorithms discussed in the paper.

\clearpage
\bibliography{Symmetries}

\section{Competing interests}
The authors declare no competing interests.

\newpage
\appendix
\section{Supplementary material}
\subsection{Categorical equivalence}\label{sec: appendix_cat}

In this section, we prove the categorical equivalence between bipartite graphs and hypergraphs. Given a category $C$, we indicate its objects with $Ob(C)$ and its morphisms as $Mor(C)$ or $Mor_C(A, B)$ if we need to specify source objects $A$ and target objects $B$.

\begin{defn}[$\mathbf{Hyper}$]\label{def: cat_hypg}
    The category $\mathbf{Hyper}$ has as objects the hypergraphs as defined in \cref{def: hypg}. A morphism $f \in \mathrm{Mor}(\mathbf{Hyper})$ between two hypergraphs $H_1=(N_1,E_1)$ and $H_2=(N_2,E_2)$ is a pair of maps $f=(f_N,f_E): H_1\to H_2$, where $f_N:N_1\to N_2$ and $f_E:E_1\to E_2$, that satisfy: 
    $$v\in e \;\Longrightarrow\; f_N(v)\in f_E(e)$$ 
    for all $v\in N_1$ and $e\in E_1$. Hence, morphisms are maps between hypergraphs that send nodes to nodes and hyperedges to hyperedges, preserving the incidence relation.
\end{defn}

\begin{defn}[$\mathbf{BiparGraph}$]\label{def: cat_bip}
    The category $\mathbf{BiparGraph}$ has as objects the bipartite graphs as defined in \cref{def: bip_graph}. A morphism between two bipartite graphs $G_A = (V_{A_1} \sqcup V_{A_2}, E_A)$ and  $G_B = (V_{B_1} \sqcup V_{B_2}, E_B)$ is a triple of maps $f = (f_1, f_2, f_E)$ where:
    \begin{itemize}
        \item $f_1: V_{A_1} \rightarrow V_{B_1}$
        \item $f_2: V_{A_2} \rightarrow V_{B_2}$
        \item $f_E: E_A \rightarrow E_B$ defined by $f_E((v,u)) = (f_1(v), f_2(u))$
    \end{itemize}
    and $(v,u)\in E_A \;\Rightarrow\; (f_1(v),f_2(u))\in E_B$. Hence, morphisms are maps between bipartite graphs that send nodes of the first (resp. second) layer to nodes of the first (resp. second) layer, preserving adjacency.
\end{defn}

\begin{defn}[Functor]
    Given two categories $C, D$, a functor $F: C \rightarrow D$ is a map that:
    \begin{itemize}
        \item $\forall x \in \mathrm{Ob}(C) \Rightarrow F\,x \in \mathrm{Ob}(D)$.
        \item If $f \in \mathrm{Mor}(C)$, $f: X \rightarrow Y$ with $X, Y \in \mathrm{Ob}(C)$, then $F\,f : F\,X \rightarrow F\,Y$ is a morphism of $D$ such that:
        \begin{itemize}
            \item $F\,\mathrm{id}_{X} = \mathrm{id}_{F\,X} \quad\forall X \in \mathrm{Ob}(C)$,
            \item $F(f \circ g) = F\,f \circ F\,g \quad \forall f: X \rightarrow Y, \,g: Y \rightarrow Z$ with $X, Y, Z \in \mathrm{Ob}(C)$.
        \end{itemize} 
    \end{itemize}
\end{defn}

\begin{defn}[Categorical equivalence]\label{def: cat_eq}
    Two categories $C, D$ are equivalent if there exists a functor $F: C \rightarrow D$ such that:
    \begin{itemize}
        \item $F$ is full: $\forall c_1, c_2 \in \mathrm{Ob}(C)$, the map $\mathrm{Mor}_C(c_1, c_2) \longrightarrow \mathrm{Mor}_D(F\,c_1, F\,c_2)$ induced by $F$ is surjective.
        \item $F$ is faithful: $\forall c_1, c_2 \in \mathrm{Ob}(C)$, the map $\mathrm{Mor}_C(c_1, c_2) \longrightarrow \mathrm{Mor}_D(F\,c_1, F\,c_2)$ induced by $F$ is injective.
        \item $F$ is dense (essentially surjective): $\forall d \in \mathrm{Ob}(D)$, $\exists c \in \mathrm{Ob}(C)$ such that $d \cong F\,c$ (i.e., $d$ is isomorphic to $F\,c$).
    \end{itemize}
\end{defn}

\begin{thm}
    The categories $\mathbf{Hyper}$ and $\mathbf{BiparGraph}$ are equivalent.
\end{thm}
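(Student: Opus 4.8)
The plan is to construct an explicit functor $F\colon \mathbf{Hyper} \to \mathbf{BiparGraph}$ realizing the incidence (factor) graph construction, and then to verify the three conditions of \cref{def: cat_eq} in turn. On objects, for a hypergraph $H = (N,E)$ I set $F(H) = (N \sqcup E, \mathcal{F})$, where the first layer is $N$, the second layer is $E$, and $\mathcal{F} = \{(v,e) \in N \times E : v \in e\}$ records the incidence relation. On a hypergraph morphism $f = (f_N, f_E)$ I define $F(f) = (f_N, f_E, f_{\mathcal{F}})$ with $f_{\mathcal{F}}(v,e) = (f_N(v), f_E(e))$. The defining implication $v \in e \Rightarrow f_N(v) \in f_E(e)$ of \cref{def: cat_hypg} is exactly the condition guaranteeing $(f_N(v), f_E(e)) \in \mathcal{F}_{H_2}$, so $F(f)$ is a well-defined morphism of $\mathbf{BiparGraph}$ in the sense of \cref{def: cat_bip}, whose edge component is forced to be the coordinatewise map.

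Next I would check functoriality, which is routine: $F$ sends $\mathrm{id}_H = (\mathrm{id}_N, \mathrm{id}_E)$ to $(\mathrm{id}_N, \mathrm{id}_E, \mathrm{id}_{\mathcal{F}}) = \mathrm{id}_{F(H)}$, and since the third component is determined coordinatewise by the first two, composition is preserved. Faithfulness is then immediate, since the components $f_N$ and $f_E$ of a hypergraph morphism reappear verbatim as the first two components of $F(f)$; thus $F(f) = F(g)$ forces $f_N = g_N$ and $f_E = g_E$, hence $f = g$.

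For fullness I take a bipartite morphism $(g_1, g_2)\colon F(H_1) \to F(H_2)$. By \cref{def: cat_bip} its edge component is necessarily $g_E(v,e) = (g_1(v), g_2(e))$, and the adjacency-preservation requirement $(v,e) \in \mathcal{F}_{H_1} \Rightarrow (g_1(v), g_2(e)) \in \mathcal{F}_{H_2}$ unwinds, under the definition of $\mathcal{F}$, into precisely $v \in e \Rightarrow g_1(v) \in g_2(e)$. Hence $(g_1, g_2)$ is a morphism in $\mathbf{Hyper}$ satisfying $F(g_1, g_2) = (g_1, g_2, g_E)$, so $F$ is surjective on hom-sets.

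The main obstacle is density. Given a bipartite graph $G = (V_1 \sqcup V_2, E)$, I would build a hypergraph $H_G$ with node set $V_1$ whose hyperedges are indexed by $V_2$, setting $e_u = \{\, v \in V_1 : (v,u) \in E \,\}$ for each $u \in V_2$, and propose as the isomorphism $F(H_G) \cong G$ the identity on $V_1$ together with $u \mapsto e_u$ on the second layer. Two points require care. First, each $e_u$ must be non-empty, i.e.\ the second layer carries no isolated node; this is exactly the standing non-degeneracy convention recorded in the remark before the incidence-graph discussion (an isolated second-layer node would encode an empty hyperedge, excluded by \cref{def: hypg}), so density is proved relative to this assumption. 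Second, $u \mapsto e_u$ must be a bijection onto the hyperedge set, which would fail if two distinct second-layer nodes shared the same neighborhood and hyperedges were forbidden to repeat. This is the crux, and it is resolved by the very convention flagged after \cref{def: hypg} and exploited in the quotient construction: $E$ is taken as an indexed family of multiset hyperedges rather than a plain set, so that coincident neighborhoods correspond to genuinely distinct, distinctly labeled hyperedges. Under this convention the assignment is bijective, the two layer-maps assemble into a bipartite-graph isomorphism by a direct check against \cref{def: bip_graph}, and $F$ is dense, completing the equivalence.
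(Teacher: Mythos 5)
Your proposal is correct and follows essentially the same route as the paper: the same incidence-graph functor on objects and the same coordinatewise edge map on morphisms, with fullness, faithfulness, and density verified in the same way. The only difference is that you treat the density step more carefully than the paper does, explicitly flagging the no-isolated-second-layer-node convention and the need to index hyperedges by $V_2$ (so that coincident neighborhoods yield distinct hyperedges) — both points the paper handles only implicitly by setting $E = V_2$ and relying on its earlier remarks.
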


\begin{proof}
    We construct a functor $F: \mathbf{Hyper} \rightarrow \mathbf{BiparGraph}$ and prove it satisfies the conditions in \cref{def: cat_eq}.
    
    \textbf{Functor definition:}
    \begin{description}
        \item[On objects] Given a hypergraph $H = (N, E)$, define the bipartite graph
        $$F\,H \coloneqq (N \sqcup E, A_H)$$
        where $A_H = \{(v, e) \mid v \in N, e \in E, v \in e\}$.
        
        \item[On morphisms] Given a hypergraph morphism $g=(g_N,g_E):H_1\to H_2$, define 
        $$F\,g \coloneqq (g_N, g_E, g_A)$$
        where $g_A((v,e)) = (g_N(v), g_E(e))$ for $(v,e) \in A_{H_1}$.
    \end{description}
    
    \textbf{Verification that $F$ is a functor:}
    \begin{description}
        \item[Preserves identities] For any hypergraph $H$, $F(\mathrm{id}_H) = (\mathrm{id}_N, \mathrm{id}_E, \mathrm{id}_{A_H}) = \mathrm{id}_{F\,H}$.
        \item[Preserves composition] For morphisms $g_1: H_1 \to H_2$ and $g_2: H_2 \to H_3$, we have
        $$F(g_2 \circ g_1) = F((g_{2,N} \circ g_{1,N}, g_{2,E} \circ g_{1,E})) = (g_{2,N} \circ g_{1,N}, g_{2,E} \circ g_{1,E}, (g_{2,A} \circ g_{1,A})) = F\,g_2 \circ F\,g_1.$$
    \end{description}
    
    Now we prove that $F$ is full, faithful, and essentially surjective.

    \begin{description}
        \item[Full] Take two hypergraphs $H_1 = (N_1, E_1)$ and $H_2 = (N_2, E_2)$, so $F\,H_1 = (N_1 \sqcup E_1, A_1)$ and $F\,H_2 = (N_2 \sqcup E_2, A_2)$.
        
        Let $f = (f_1, f_2, f_A): F\,H_1 \rightarrow F\,H_2$ be a morphism in $\mathbf{BiparGraph}$.
        Define $g = (f_1, f_2): H_1 \rightarrow H_2$. We need to verify that $g$ is a morphism in $\mathbf{Hyper}$.
        
        For any $v \in N_1$ and $e \in E_1$ with $v \in e$, we have $(v,e) \in A_1$. Since $f$ is a bipartite graph morphism, $(f_1(v), f_2(e)) \in A_2$, which means $f_1(v) \in f_2(e)$. Thus $g$ preserves incidence and is a hypergraph morphism with $F\,g = f$.

        \item[Faithful] Consider two hypergraph morphisms $g_1, g_2: H_1 \rightarrow H_2$.
        Suppose $F\,g_1 = F\,g_2$. Then:
        \begin{align*}
            F\,g_1 &= (g_{1,N}, g_{1,E}, g_{1,A}) = (g_{2,N}, g_{2,E}, g_{2,A}) = F\,g_2
        \end{align*}
        This implies $g_{1,N} = g_{2,N}$ and $g_{1,E} = g_{2,E}$, hence $g_1 = g_2$.
        
        \item[Dense] Given a bipartite graph $G = (V_1 \sqcup V_2, A)$, define the hypergraph $H = (N, E)$ where:
        \begin{itemize}
            \item $N = V_1$
            \item $E = V_2$, and for each $e \in V_2$, define the corresponding hyperedge as the set $\{v \in V_1 \mid (v, e) \in A\}$
        \end{itemize}
        
        Then $F\,H = (V_1 \sqcup V_2, \{(v, e) \mid v \in V_1, e \in V_2, v \in e\}) = (V_1 \sqcup V_2, A) = G$.
        
        Thus $G = F\,H$.
    \end{description}
    
    Since $F$ is full, faithful, and dense, the categories $\mathbf{Hyper}$ and $\mathbf{BiparGraph}$ are equivalent.
\end{proof}

\end{document}